\documentclass[11pt]{article}
\usepackage{amsmath,amssymb,amsthm}
\usepackage{geometry}
\usepackage{mathtools}
\usepackage{mathrsfs}
\usepackage[hidelinks]{hyperref}
\usepackage{enumitem} 
\numberwithin{equation}{section}  
\usepackage{cite} 

\newtheorem{theorem}{Theorem}[section]
\newtheorem{lemma}{Lemma}[section]

\newtheorem{remark}{Remark}[section]
\newtheorem{definition}{Definition}[section]

\title{Renormalized Solution for the Nonlinear Parabolic Problem with Lower Order Terms}
\author{
	LI Shijun\thanks{Email: sjlee@hainanu.edu.cn} \\
	School of Mathematics and Statistics, Hainan University, Haikou, China \\
	\and
	LI Shujing \\
	School of Mathematics and Statistics, Hainan University, Haikou, China \\
	\and
	XU Shaopeng\thanks{Corresponding author: xuxsp@126.com} \\
	School of Mathematics and Statistics, Hainan University, Haikou, China 
}
\date{\notag}
\begin{document}
	\maketitle
	\noindent \textbf{Abstract:} 
	In this paper, we consider the following nonlinear parabolic equation with non-coercive terms in \(R^N\) space
	\[
	\dfrac{\partial u}{\partial t} -\nabla \cdot (a(x,t,u,\nabla u)+ \Phi(x,t,\nabla u))=f, \text{ in }\Omega \times (0,T).
	\]
	Here \(\Omega\) is a bounded open set of \(R^N\) with the boundary \(\partial \Omega\) satisfying Lipschitz condition. The Carathéodory function \(\Phi\) is restricted by $|\Phi(x,t,s)|\le c(x,t)|s|^\gamma$ with parameters depending on $p$ and $N$. And the initial value $u(x,0)=u_0(x)$. For convenience, we define the domain $Q\coloneqq\Omega \times (0,T)$ and the boundary similarly. Then for $f\in L^1(Q)$ and $u_0\in L^1(\Omega)$, we prove the existence and uniqueness of a renormalized solution via truncation methods, monotone operator theory, and a prior gradient estimates.
	\bigskip

	\noindent \textbf{Keywords:}  
	Renormalized solutions; Lower order terms; approximation equation; integrable data.
	
	\vspace{1cm}

\section{Introduction}

	In recent years, with the development of fluid mechanics, elasticity, image processing, and other disciplines, research on partial differential equations (PDEs) has increased significantly. However, in practical applications, the strong regularity requirements on initial data, boundary values, coefficients, and nonhomogeneous terms often limit the applicability of classical solutions. Even some simple equations may have no solution in the weak sense. When the initial data and right-hand side belong to \( L^1 \), weak solutions may not exist (e.g., the Boltzmann equation). To address this issue, DiPerna and Lions first introduced the concept of renormalized solutions for the Boltzmann equation \cite{DiPerna1989b} and established the well-posedness of solutions in the renormalized sense. This approach has since found important applications in hydrodynamic limits, nonlinear elliptic equations, and parabolic equations.
	
	The Boltzmann equation is a nonlinear integro-differential equation describing the statistical behavior of thermodynamic systems out of equilibrium. Its right-hand side is the collision operator. It describes the irreversible process of a dilute gas transitioning from an arbitrary state to equilibrium, i.e., the evolution of the molecular velocity distribution function. For a single gas component with spherically symmetric molecular interactions and an external force \( F \), the velocity distribution function \( f(x, v, t) \) satisfies
	\[
	\frac{\partial f}{\partial t} + v \cdot \nabla_x f + \frac{F}{m} \cdot \nabla_v f = \int (f' f_1' - f f_1) \, g \, b \, \mathrm{d}b \, \mathrm{d}s \, \mathrm{d}v_1 = Q(f, f),
	\]
	where the right-hand side is the collision integral, representing changes in \( f \) due to molecular collisions.
	
	In the 1980s, based on the properties of the Boltzmann equation, DiPerna and Lions studied the Cauchy problem for the equation
	\[
	\frac{\partial f}{\partial t} + \xi \cdot \nabla_x f = Q(f, f)
	\]
	with large initial data in \( (0, \infty) \times \mathbb{R}^N \times \mathbb{R}^N \) \cite{DiPerna1989b}. They proved that sequences of classical solutions with a priori bounds converge weakly in \( L^1(Q) \) to a renormalized solution and, from the stability results, derived the existence of global renormalized solutions under given initial conditions.
	
	The concept of renormalized solutions was introduced in 1989 by DiPerna et al. for first-order equations \cite{DiPerna1989a,DiPerna1989b}. Subsequently, it was developed for elliptic equations with \( L^1 \) data \cite{Murat1993} and for elliptic equations with bounded measure data \cite{DalMaso1999}, and extended to parabolic equations with \( L^1 \) data \cite{Blanchard1997,Blanchard2001}. In 1995, B\'enilan et al. introduced the concept of entropy solutions \cite{Prignet1997}, with similar results in the parabolic setting. Over time, more general nonlinear elliptic and parabolic equations have been considered. For example, in 2001, Blanchard et al. \cite{Blanchard2001} proved the existence of renormalized solutions for a nonlinear parabolic equation with a non-coercive lower-order term \( \operatorname{div}(\Phi(u)) \). In 2003, Boccardo et al. \cite{Boccardo2003} studied bounded and unbounded solutions for equations with \( u_0 \in L^1(Q) \) and \( E \in (L^2(Q))^N \). In 2010, Di Nardo \cite{DiNardo2010} proved the existence of renormalized solutions for an equation with \( f \in L^1(Q) \) and \( u_0 \in L^1(\Omega) \). In 2011, Di Nardo et al. \cite{DiNardo2011} further considered an additional non-coercive lower-order term \( b|\nabla u|^\delta \) and proved the existence and uniqueness of renormalized solutions.
	
	In this paper, we study the following nonlinear parabolic equation with a non-coercive term in \( \mathbb{R}^N \):
	\begin{equation}\label{equations}
		\begin{cases}
			\dfrac{\partial u}{\partial t} - \nabla\cdot\bigl(a( x, t, \nabla u) + \Phi( x, t, u)\bigr) = f, & (x, t) \in (0,T) \times \Omega, \\[6pt]
			\bigl(a( x,t, \nabla u) + \Phi( x,t, u)\bigr) \cdot \mathbf{n} = 0, & (t, x) \in Q_n, \\[4pt]
			u = 0, & (t, x) \in Q_d, \\[4pt]
			u(x, 0) = u_0(x), & x \in \Omega.
		\end{cases}
	\end{equation}
	where \( f \in L^1(Q) \), \( u_0 \in L^1(\Omega) \), \( \Omega \subset \mathbb{R}^N \) is a bounded open set with Lipschitz boundary, \( Q = \Omega \times (0, T) \), \( Q_n = \Gamma_n \times (0, T) \), \( Q_d = \Gamma_d \times (0, T) \), with \( \Gamma_n \cup \Gamma_d = \partial\Omega \), \( \Gamma_n \cap \Gamma_d = \emptyset \), and \( \sigma(\Gamma_d) > 0 \). The function \( \Phi : Q \times \mathbb{R} \to \mathbb{R}^N \) is a Carath\'eodory function satisfying \( |\Phi(x, t, s)| \leq c(x, t) |s|^\gamma \), with conditions depending on \( p \) and \( N \). The motivation for studying renormalized solutions to (3) comes from homogenization problems where \( \Omega \) is a perforated domain, with Neumann conditions on the boundary of the holes and Dirichlet conditions on the outer boundary. Thus, we consider a mixed boundary problem. The main novelty is that the term \( -\operatorname{div}\Phi(x, t, u) \) is non-coercive. When \( \Phi \equiv 0 \), Boccardo et al. \cite{Boccardo1989} proved the existence of weak solutions for parabolic problems with Dirichlet boundary conditions, and similar results hold for \( L^1 \) data or bounded measures when \( p > 2 - \frac{1}{N+1} \) \cite{Boccardo1997}. In those works, weak solutions belong to \( L^m(0, T; W_0^{1,m}(\Omega)) \) with \( m < \frac{p(N+1)-N}{N+1} \). However, existence and uniqueness of weak solutions are not guaranteed in general \cite{Prignet1995,Serrin1964}. To remove restrictions on \( p \) and ensure stability, we adopt the framework of renormalized solutions.
	
	In this paper, we first prove the existence of renormalized solutions for a related approximating equation. We derive a priori estimates for the gradients of the approximate solutions using techniques from \cite{Bottaro1973,Porzio1999}. We then verify the conditions for renormalized solutions and, using functional analysis and real variable methods, obtain the convergence of the approximate solutions, thereby establishing the existence of renormalized solutions. Finally, we prove the existence and uniqueness of renormalized solutions to problem (3). The existence proof follows a similar approach as for the approximating equation, and uniqueness is obtained by showing that two renormalized solutions with the same initial data satisfy the required estimates.

\section{Marks and Approximate Problem}
	In this paper, $q \in [1, \infty]$. The space $W_{\Gamma_d}^{1,q}(\Omega)$ denotes the space of functions in $W^{1,q}(\Omega)$ that vanish on $\Gamma_d \subset \partial \Omega$.
	
	Let $\Omega \subset \mathbb{R}^N$ ($N \geq 2$) be a bounded connected open subset with Lipschitz boundary and $\sigma(\Gamma_d) > 0$. The norm of $W_{\Gamma_d}^{1,q}(\Omega)$ is defined as
	\begin{equation}
		\|v\|_{W_{\Gamma_d}^{1,q}(\Omega)} = \|\nabla v\|_{L^q(\Omega)} \qquad \text{(see \cite{Ziemer1989})}.
	\end{equation}
\begin{remark}
	$a:Q\times R^N\rightarrow R^N $ is a Carathéodory function satisfying the following conditions:
	\begin{equation}\label{as1}
		a(x,t,\xi)\xi\geq \alpha|\xi|^p ,\quad \alpha >0,\quad a.e. (x,T)\in Q,
	\end{equation}
	\begin{equation}\label{as2}
		a(x,t,0)=0,
	\end{equation}
	\begin{equation}\label{as3}
		|a(x,t,\xi)|\leq k(b(x,t)+|\xi|^{p-1}), \quad a.e. (x,t)\in Q,\quad b(x,t)\in L^{p'}(Q), \quad k>0,
	\end{equation}
	\begin{equation}\label{as4}
		(a(x,t,\xi)-a(x,t,\xi'))\cdot (\xi-\xi')\geq 0,\quad \forall \xi, \xi' \in R^N, a.e. (x,t)\in Q,\quad \xi\neq\xi'.
	\end{equation}
\end{remark}
	and
\begin{remark}
	$\Phi: Q\times R^N\rightarrow R^N$ is a Carathéodory function satisfying the following conditions:
	\begin{equation}\label{as5}
		|\Phi(x,t,s)|\leq c(x,t)|s|^\gamma,\quad \forall s\in R,\quad a.e.(x,t)\in Q,
	\end{equation}
	\begin{equation}\label{as6}
		\gamma=\dfrac {N+2}{N+p}(p-1),\quad c(x,t)\in L^m(Q),\quad m=\dfrac{N+p}{p-1}.
	\end{equation}
		Therefore,
	\begin{equation}\label{as7}
		f\in L^1(Q) \text{ and } u_0\in L^1(\Omega)
	\end{equation}
	is held.
\end{remark}
\begin{definition}
	For any $k > 0$, define
	\[
	T_k(r) = \min\{k, \max\{r, -k\}\} =
	\begin{cases} 
		k, & r \geq k, \\
		r, & |r| < k, \\
		-k, & r \leq -k.
	\end{cases}
	\]
	\[
	\Theta_k(r) =
	\begin{cases} 
		\dfrac{r^2}{2}, & |r| < k, \\[10pt]
		k|r| - \dfrac{k^2}{2}, & |r| \geq k.
	\end{cases}
	\]
	Here $\Theta_k(r)$ is a primitive function of $T_k(r)$. Clearly, the functions defined above are Lipschitz continuous, and we have $T_k(u) \in L^p(0,T; W_{\Gamma_d}^{1,p}(\Omega))$,
	\[
	\nabla T_k(u) \in (L^p(Q))^N,
	\]
	with $|T_k| \leq k$, $|T_k(r)| \leq |r|$, $\Theta_k(r) \geq 0$, and $\Theta_k(r) \leq k |r|$.
\end{definition}
The notion of a very weak gradient is introduced below.

\begin{definition}
	Let $u$ be a measurable function defined on $Q$ such that for every $k > 0$, $T_k(u) \in L^p(0,T; W_{\Gamma_d}^{1,p}(\Omega))$. Then there exists a unique measurable function $v : Q \to \mathbb{R}^N$ satisfying
	\[
	\nabla T_k(u) = v \, \chi_{\{|u| < k\}} \quad \text{a.e. in } Q, \text{ for every } k > 0,
	\]
	where $\chi_E$ denotes the characteristic function of a measurable set $E$. The function $v$ is called a \textbf{very weak gradient} of $u$ and is denoted by $v = \nabla u$. If $u \in L^1(0,T; W_{\Gamma_d}^{1,1}(\Omega))$, then $v$ coincides with the weak derivative of $u$.
\end{definition}

Using the notion of a very weak gradient, one can define renormalized solutions.

\begin{definition}
	A measurable function $u$ defined on $\Omega \times (0,T)$ is called a \textbf{renormalized solution} to Problem~(3) if the following conditions hold:
	\begin{equation}
		u \in L^\infty(0,T; L^1(\Omega)), 
	\end{equation}
	\begin{equation}\label{beforetau0}
		T_k(u) \in L^p(0,T; W^{1,p}_{\text{1},p}(\Omega)), \quad \forall k > 0, 
	\end{equation}
	\begin{equation}\label{iintau0}
		\lim_{n \to \infty} \frac{1}{n} \iint_{\{(x,t) \in Q : |u| \leq n\}} a(x,t,\nabla u) \cdot \nabla u \mathrm{d}x\mathrm{d}t = 0.
	\end{equation}
	For any $\varphi \in C^1(\overline{Q})$ with $\varphi(x, T) = 0$, and for any pointwise $C^1$ function $S \in W^{2,\infty}(\mathbb{R})$ such that $S'$ has compact support, $u$ satisfies the following equality:
	\begin{equation}\label{longeqv}
		\begin{aligned}[b]
			-\int_{\Omega}\varphi(&x,0)S(u_0)\mathrm{d}x
			+\int^0_T \int_{\Omega}S(u)\frac{\partial \varphi}{\partial t}\mathrm{d}x\mathrm{d}t
			+\int^0_T \int_{\Omega}S'a(x,t,\nabla u)\nabla \varphi \mathrm{d}x\mathrm{d}t \\
			+\int^0_T& \int_{\Omega} S''(u)\varphi a(x,t,\nabla u)\nabla u \mathrm{d}x\mathrm{d}t 
			+\int^0_T \int_{\Omega}\int_{\Omega}S'(u)\Phi(x,t,u)\nabla \varphi \mathrm{d}x\mathrm{d}t \\ 
			+&\int^0_T \int_{\Omega}S''(u)\varphi\Phi(x,t,u)\nabla u \mathrm{d}x\mathrm{d}t \\
			&=\int^0_T \int_{\Omega}fS'(u)\varphi \mathrm{d}x\mathrm{d}t.
		\end{aligned}
	\end{equation}
\end{definition}
\begin{remark}
	In particular, there exists $M > 0$ such that $\operatorname{supp} S' \subset [-M, M]$ and
	\[
	\int_0^T \int_{\Omega} \Phi(x, t, u) S'(u) \nabla u\mathrm{d}x\mathrm{d}t= \int_0^T \int_{\Omega} \Phi(x, t, T_M(u)) S'(u) \nabla T_M(u) \mathrm{d}x\mathrm{d}t.
	\]
\end{remark}
To obtain a priori estimates for the approximate solutions to equation \eqref{equations}, we need a technical result. It provides estimates for $u$ and its gradient $\nabla u$ in the Lorentz spaces
\[
L^{\frac{p(N+1)-N}{N(p-1)}, \infty}
\text{ and }
L^{\frac{p(N+1)-N}{(N+1)(p-1)}, \infty},
\]
respectively. By the classical Lorentz space embeddings into Lebesgue spaces \cite{Hunt1966}, we then derive estimates for $u$ and $|\nabla u|$ in the Lebesgue spaces
\[
L^m(Q)
\text{ and }
L^s(Q)
\]
respectively, where
\[
m < \frac{p(N+1)-N}{N(p-1)}, \qquad s < \frac{p(N+1)-N}{(N+1)(p-1)}.
\] 
\begin{definition}[Equimeasurable Decreasing Rearrangement]
	Let $u$ be a measurable function that is finite almost everywhere. Its distribution function $\delta_u(t)$ is defined as
	\[
	\delta_u(t) = \mu\{ x \in \Omega : |u(x)| > t \}.
	\]
	Then $\delta_u(t)$ is monotonically decreasing on $[0, \infty)$.
\end{definition}
\begin{definition}[\cite{Adams1975}\cite{Lorentz1950}]
	If $u \in L^p(\Omega)$, then
	\begin{equation}
	\|u\|_{L^p(\Omega)} = 
	\begin{cases} 
		\left( p \displaystyle\int_0^\infty t^p \delta_u(t) \, \dfrac{\mathrm{d}t}{t} \right)^{\frac{1}{p}}, & 1 \leq p < \infty, \\[10pt]
		\,\inf\{ t : \delta_u(t) = 0 \}, & p = \infty.
	\end{cases}
	\end{equation}
	The \emph{equimeasurable decreasing rearrangement} $u^*$ of $u$ is defined as
	\[
	u^*(s) = \inf\{ t : \delta_u(t) \leq s \}.
	\]
	Since $\delta_u$ is monotonically decreasing, $u^*$ is also monotonically decreasing.
\end{definition}
\begin{definition}[\cite{Adams1975}\cite{Lorentz1950}]
	If $u$ is measurable in $\Omega$, then
	\[
	u^{**}(t) = \frac{1}{t} \int_0^t u^*(s) \, ds
	\]
	is the average value of $u^*$ over $[0,t]$. Since $u^*$ is monotonically decreasing, we have 
	$$u^*(t) \leq u^{**}(t).$$
	For $1 \leq p \leq \infty$, define the norm
	\begin{equation}
	\|u\|_{L^{p,q}(\Omega)} = 
	\begin{cases}
		\left( \displaystyle\int_0^\infty \left( t^{\frac{1}{p}} u^*(t) \right)^q \dfrac{\mathrm{d}t}{t} \right)^{\frac{1}{q}}, & 1 \leq q < \infty, \\[10pt]
		\,\sup\{ t^{\frac{1}{p}} u^{**}(t) : t>0\}, & q = \infty.
	\end{cases}
	\end{equation}
	The \textbf{Lorentz space} is the set of measurable functions $u$ on $\Omega$ such that $\|u\|_{L^{p,q}(\Omega)} < \infty$. Moreover, for $1 \leq p < \infty$ and $1 \leq q \leq \infty$, the Lorentz space is a Banach space.
\end{definition}
\begin{lemma}[\cite{DiNardo2010}]\label{lemmadinardo}
	Let $\Omega$ be an open set in Euclidean space $R^N$ with finite measure. $u$ is a measurable function satisfying for $\forall k>0$:
	\begin{equation}
		T_{k(u)\in L^{p}(0,T;W^{1,p}_{\Gamma_{d}}(\Omega))}\bigcap L^{\infty}(0,T;L^{2}(\Omega)),
	\end{equation}
	such that
	\begin{equation}
		\sup_{t\in(0,T)}\int_{\Omega}\left| T_{k}(u)\right|^{2}\mathrm{d}x+\int_{0}^{T}\int_{\Omega}\left| \nabla T_{k}(u)\right|^{p}\leq Mk,
	\end{equation}
	\begin{equation}
		\sup_{{t\in(0,T)}}\int_{\Omega}|T_k(u(t))|^2\mathrm{d}x+\int_0^T\int_{\Omega}|\nabla T_k(u)|^p \mathrm{d}x\mathrm{d}t\leq kM+L,
	\end{equation}
	where \(M\) and \(L\) are positive constant. Sharply we can see that
	\begin{equation}
		\left\||u|^{p-1}\right\| _{L^{\frac{p(N+1)-N}{N(p-1)},\infty}(Q)}\leq C(N,p)M^{\frac{(p-1(N+p))}{p(N+1)-N}}|Q|^{\frac{1}{p'}\frac{N}{N+p'}},
	\end{equation}
	and
	\begin{equation}
		\||\nabla u|^{p-1}\|_{L^{\frac{p(N+1)-N}{(N+1)(p-1)},\infty}(Q)}\leq C(N,p)M^{\frac{(N+2)(p-1)}{p(N+1)-N}}.
	\end{equation}
\end{lemma}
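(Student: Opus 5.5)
The plan is the standard Boccardo–Gallouët argument in the parabolic form of Porzio and Di Nardo. We first bound the distribution function of $u$, then that of $\nabla u$, and finally read off the two Lorentz quasi-norms. The hypothesis enters only through the truncated energy bound
\[
\sup_{t}\int_\Omega |T_k(u)|^2\,dx+\iint_Q|\nabla T_k(u)|^p\le kM.
\]
With $L\neq 0$ the same argument goes through, with $M$ replaced by $M+L/k$ for $k$ large.

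\emph{Step 1: level sets of $u$.} We use the parabolic Gagliardo--Nirenberg inequality for $v=T_k(u)\in L^\infty(0,T;L^2)\cap L^p(0,T;W^{1,p}_{\Gamma_d})$:
\[
\iint_Q |v|^{p\frac{N+2}{N}}\le C\Big(\iint_Q|\nabla v|^p\Big)\Big(\sup_t\int_\Omega |v|^2\Big)^{p/N}.
\]
The Poincaré inequality holds in $W^{1,p}_{\Gamma_d}$ because $\sigma(\Gamma_d)>0$. Since $|T_k(u)|=k$ on $\{|u|>k\}$, this gives
\[
k^{p\frac{N+2}{N}}\,|\{|u|>k\}|\le C(kM)^{1+p/N},
\qquad\text{so}\qquad
|\{|u|>k\}|\le C M^{\frac{N+p}{N}}k^{-\frac{p(N+1)-N}{N}} .
\]
Rewriting this for $|u|^{p-1}$ shows that $|u|^{p-1}$ lies in weak-$L^{r}$ with $r=\frac{p(N+1)-N}{N(p-1)}$. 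The quasi-norm is of order $M^{\frac{(N+p)(p-1)}{p(N+1)-N}}$, which is the power of $M$ in the first estimate.

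The statement uses the norm defined through $u^{**}$ rather than $u^*$. Since $r>1$, the two are equivalent up to a constant $C(N,p)$, by Hardy's inequality, and this constant is absorbed. The factor involving $|Q|$ enters only where one trades between $\sup_t s^{1/r}u^*$ and the $u^{**}$-norm on a set of finite measure, or equivalently where one fixes the normalization when $M$ is small relative to $|Q|$. I will track it at that point and not earlier.

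\emph{Step 2: level sets of the gradient.} For $\lambda,k>0$ we split
\[
|\{|\nabla u|>\lambda\}|\le |\{|u|>k\}|+|\{|\nabla u|>\lambda,\ |u|\le k\}|
\le C M^{\frac{N+p}{N}}k^{-\frac{p(N+1)-N}{N}}+\frac{kM}{\lambda^p}.
\]
The second term is Chebyshev's inequality applied to $\nabla T_k(u)$, using the energy bound. We then optimize in $k$, choosing $k$ so that the two terms balance. This gives $k\sim M^{\frac1{N+1}}\lambda^{\frac{N}{N+1}}$, hence
\[
|\{|\nabla u|>\lambda\}|\le C\,M^{\frac{N+2}{N+1}}\lambda^{-\frac{p(N+1)-N}{N+1}}.
\]
Translating this to $|\nabla u|^{p-1}$ shows that it lies in weak-$L^{s}$ with $s=\frac{p(N+1)-N}{(N+1)(p-1)}$. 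The quasi-norm is bounded by $C M^{\frac{(N+2)(p-1)}{p(N+1)-N}}$, which is exactly the stated exponent. Passing to the $u^{**}$-norm again requires $s>1$, i.e.\ $p>2-\frac1{N+1}$. For $s\le1$ one has to interpret the quasi-norm via $u^*$; I would note this caveat.

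\emph{Main obstacle.} The delicate point is Step 1's parabolic embedding with mixed boundary conditions. It requires a Sobolev inequality in $W^{1,p}_{\Gamma_d}(\Omega)$, available since $\Omega$ is Lipschitz and $\sigma(\Gamma_d)>0$, combined with Hölder's inequality in time. The rest is exponent bookkeeping, including matching the $|Q|$-dependent factor.
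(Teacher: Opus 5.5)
Your core argument is correct and is the standard one behind the cited estimate. The paper gives no proof of this lemma and simply quotes Di Nardo. The argument applies Gagliardo--Nirenberg to $T_k(u)$, which gives the level-set bound $|\{|u|>k\}|\le CM^{\frac{N+p}{N}}k^{-\frac{p(N+1)-N}{N}}$. It then splits $\{|\nabla u|>\lambda\}$ and chooses $k\sim M^{\frac1{N+1}}\lambda^{\frac{N}{N+1}}$. All your exponents check out, including both powers of $M$.

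\textbf{The gap: the factor $|Q|^{\frac1{p'}\frac{N}{N+p'}}$.} You promise to recover this factor when passing from $u^*$ to $u^{**}$, but it does not arise there. Hardy's inequality gives $\sup_t t^{1/r}f^{**}(t)\le r'\sup_t t^{1/r}f^{*}(t)$, with a constant depending only on $r$, whatever the measure of the underlying set.

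In fact no argument can produce it under the hypothesis with $Mk$. Set $\tilde u(x,t)=\beta\,u(x/\rho,t/\tau)$ on $\rho\Omega\times(0,\tau T)$ with $\tau=\beta^{2-p}\rho^{p}$. Then:
\begin{itemize}
\item the hypothesis holds with $\tilde M=\beta\rho^N M$;
\item both $\||\tilde u|^{p-1}\|_{L^{r,\infty}}$ and $\tilde M^{\frac{(p-1)(N+p)}{p(N+1)-N}}$ are multiplied by the same factor $(\beta\rho^N)^{\frac{(p-1)(N+p)}{p(N+1)-N}}$;
\item $|\tilde Q|=\beta^{2-p}\rho^{N+p}|Q|\to 0$ as $\rho\to 0$ with $\beta=1$.
\end{itemize}
So the first estimate, as printed with a positive power of $|Q|$, fails for small $|Q|$. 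The $|Q|$-free bound you derived is the correct conclusion. State it as such rather than deferring the factor.

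\textbf{Other points.} Your remark about $L$ is inaccurate. Replacing $M$ by $M+L/k$ only works for $k\ge L/M$. For $k<L/M$ you have to fall back on $|\{|u|>k\}|\le |Q|$, which produces an extra term of the form $C(L/M)^{p-1}|Q|^{1/r}$. This is where a genuine $|Q|$-dependence lives, and only in the $kM+L$ version. Since the $Mk$ bound is also assumed, you can simply discard $L$.

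Your caveat about $s\le 1$ is vacuous: $s-1=\frac{1}{(N+1)(p-1)}>0$ and $r-1=\frac{p}{N(p-1)}>0$ for every $p>1$. So the $u^{**}$-norm is available for all $p$. The threshold $p>2-\frac1{N+1}$ is where $|\nabla u|$ itself, with exponent $(p-1)s$, becomes integrable. That is precisely why the lemma is stated for $|\nabla u|^{p-1}$.

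Finally, for $\Gamma_d\ne\partial\Omega$ you cannot extend by zero. The Gagliardo--Nirenberg constant then comes from an extension operator plus the Poincar\'e inequality, which also uses connectedness of $\Omega$. It therefore depends on $(\Omega,\Gamma_d)$, not only on $(N,p)$. It is invariant under dilations, so the scaling argument above is unaffected.
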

\begin{lemma}[Gagliardo--Nirenberg inequality \cite{Nirenberg1959}]
	Let $u \in L^q(\Omega)$ and assume that its $m$-th order derivative $\nabla^m u$ belongs to $L^\gamma(\Omega)$, with $q \ge 1$ and $\gamma \le \infty$. Then for every $j$ with $0 \le j < m$, the following inequality holds:
	\[
	\| \nabla^j u \|_{L^p(\Omega)} \;\le\; C \; \| \nabla^m u \|_{L^\gamma(\Omega)}^{a} \; \| u \|_{L^q(\Omega)}^{1-a},
	\]
	where
	\[
	\frac{1}{p} \;=\; \frac{j}{n} \;+\; a\Bigl(\frac{1}{\gamma} - \frac{m}{n}\Bigr) \;+\; (1-a)\frac{1}{q},
	\]
	and $C$ is a constant independent of $u$.
\end{lemma}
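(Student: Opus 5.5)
The plan is to reduce to the whole space $R^N$, prove the inequality there by combining Sobolev embedding with H\"older interpolation, and then transfer it back to $\Omega$ by an extension argument. Write $n=N$ for the dimension, as in the statement. Since $\Omega$ is bounded with Lipschitz boundary, Stein's extension operator $E$ maps $W^{m,\gamma}(\Omega)\cap L^q(\Omega)$ boundedly into the corresponding spaces on $R^N$. On a bounded domain, however, the purely multiplicative form fails in general; constants are a counterexample when $j\ge1$ or when $a$ is the only weight. I will therefore either prove it with the standard additive correction $C\|u\|_{L^q(\Omega)}$, or use the fact that in this paper $u$ vanishes on $\Gamma_d$ with $\sigma(\Gamma_d)>0$. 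In the latter case the Poincar\'e inequality in $W^{1,q}_{\Gamma_d}(\Omega)$, whose norm is $\|\nabla v\|_{L^q}$, absorbs the lower-order term, and the inequality holds exactly as stated.

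\textbf{Step 1: the case $m=1$, $j=0$ on $R^N$.} If $\gamma<n$, the Sobolev inequality gives $\|u\|_{L^{\gamma^*}}\le C\|\nabla u\|_{L^\gamma}$ with $1/\gamma^*=1/\gamma-1/n$. H\"older interpolation between $L^q$ and $L^{\gamma^*}$ then gives $\|u\|_{L^p}\le \|u\|_{L^{\gamma^*}}^a\|u\|_{L^q}^{1-a}$ with $1/p=a/\gamma^*+(1-a)/q$, which is exactly the exponent relation with $j=0$, $m=1$. If $\gamma\ge n$, I will follow Nirenberg. Apply the $L^1$ Gagliardo inequality $\|w\|_{L^{n/(n-1)}}\le C\|\nabla w\|_{L^1}$ to $w=|u|^r$, estimate $|\nabla w|\le r|u|^{r-1}|\nabla u|$ by H\"older, and choose $r$ so that the resulting powers close up, reproducing the same scaling relation.

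\textbf{Step 2: higher derivatives, $m\ge2$ and $0\le j<m$.} The key auxiliary estimate is the second-order interpolation
\[
\|\nabla v\|_{L^{s}}^2\le C\|v\|_{L^{r_1}}\|\nabla^2 v\|_{L^{r_2}},\qquad \frac{2}{s}=\frac1{r_1}+\frac1{r_2}.
\]
To prove it, integrate $\int |\partial_i v|^{s}$ by parts, moving one derivative onto $|\partial_i v|^{s-2}\partial_i v$, and then apply H\"older. Iterating this on $v=\nabla^{k}u$ shows that $j\mapsto \log\|\nabla^j u\|$ is essentially convex along the appropriate exponent line. This reduces the general $(j,m)$ case to the two endpoint situations: $j=0$, and the top-order Sobolev embedding, which is handled as in Step 1. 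Finally, a scaling argument $u_\lambda(x)=u(\lambda x)$ confirms that the exponent relation is the only admissible one, and it fixes $a$ given $p$.

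I expect the main obstacle to be the borderline cases. One is $a=1$ with $m-j-n/\gamma$ a nonnegative integer, where the inequality genuinely fails. The other is $1<\gamma<\infty$ with $a$ at the endpoint $a=1$, where the convexity argument only gives $a<1$. For these I would first try to use the weak-type (Lorentz) refinement: interpolate in $L^{p,\infty}$ and use the Lorentz embeddings already invoked in this paper (\cite{Hunt1966}). If that fails, I would simply exclude the exceptional case, as Nirenberg does. For the application in this paper only the simplest case $j=0$, $m=1$ is needed, and Step 1 together with the extension/Poincar\'e reduction suffices.
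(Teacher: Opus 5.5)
The paper gives no proof of this lemma; it is quoted from Nirenberg, and the quoted version is incomplete. It has no range for $a$, no exceptional case, and no lower-order term on a bounded $\Omega$. Your whole-space part follows Nirenberg and is the right route. In particular, Step 1 is the standard proof of the only case the paper uses. In the lemma's notation that case is $j=0$, $m=1$, $\gamma$ equal to the paper's exponent $p$, $q=2$, $a=N/(N+2)$, target exponent $p(N+2)/N$, afterwards raised to that power and integrated in $t$.

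\textbf{The passage to $\Omega$.} This is where the genuine problems are.
\begin{itemize}
\item Constants are a counterexample precisely when $j=0$ and $a>0$, not when $j\ge1$, since then $\nabla^j c=0$. For $j\ge1$ the obstruction is polynomials of degree between $j$ and $m-1$.
\item Vanishing on $\Gamma_d$ does not give the multiplicative form for $m\ge2$. On $\Omega=(0,1)^N$ with $\Gamma_d$ the face $\{x_1=0\}$, the function $u=x_1$ vanishes on $\Gamma_d$ and has $\nabla^2u=0$, yet $u\ne0$ and $\nabla u\ne0$.
\item Even for $m=1$, the absorption you describe fails. Absorbing an additive $C\|u\|_{L^q}$ into $\|\nabla u\|_{L^\gamma}^a\|u\|_{L^q}^{1-a}$ amounts to $\|u\|_{L^q}\le C\|\nabla u\|_{L^\gamma}$. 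This is false when $q>\gamma^*$; test with $\phi(\lambda(x-x_0))$, $\lambda\to\infty$. In the paper's situation this is the range $p<2N/(N+2)$, which is nonempty when $N\ge3$. Also, the Poincar\'e inequality must be taken in $W^{1,\gamma}_{\Gamma_d}$, not $W^{1,q}_{\Gamma_d}$.
\end{itemize}

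\textbf{The fix.} Never create the additive term. Stein's operator gives $\|\nabla Eu\|_{L^\gamma(\mathbb{R}^N)}\le C\|u\|_{W^{1,\gamma}(\Omega)}$ and $\|Eu\|_{L^q(\mathbb{R}^N)}\le C\|u\|_{L^q(\Omega)}$. Poincar\'e in $W^{1,\gamma}_{\Gamma_d}$ then gives $\|u\|_{W^{1,\gamma}(\Omega)}\le C\|\nabla u\|_{L^\gamma(\Omega)}$. For $\gamma<N$ you can skip the extension altogether: the Sobolev--Poincar\'e inequality $\|u\|_{L^{\gamma^*}(\Omega)}\le C\|\nabla u\|_{L^\gamma(\Omega)}$ on $W^{1,\gamma}_{\Gamma_d}$, combined with H\"older interpolation, works for every $q$. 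For general $m$, the additive version also needs $\|u\|_{W^{m,\gamma}(\Omega)}\le C(\|\nabla^m u\|_{L^\gamma(\Omega)}+\|u\|_{L^q(\Omega)})$. The reason is that the extension bound involves the full $W^{m,\gamma}$ norm, whose $L^\gamma$ part is not controlled by $\|u\|_{L^q}$ when $\gamma>q$.

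\textbf{The range of $a$.} You never impose $j/m\le a\le1$, and scaling does not enforce it. For $j=1$, $m=2$, $a=0$, $1/p=1/n+1/q$, the relation is scale-invariant. Yet $\|\nabla u\|_{L^p}\le C\|u\|_{L^q}$ fails for $u=\phi(x)\sin(Kx_1)$ as $K\to\infty$, so this hypothesis has to be added.

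\textbf{Step 2.} Your integration by parts only works for $s\ge2$. For $1\le s<2$, H\"older cannot handle the negative power $|\partial_i v|^{s-2}$. The reduction of general $(j,m,a)$ to the endpoints is the actual content of Nirenberg's induction, and you assert it rather than carry it out.

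\textbf{Borderline cases.} Your two borderline cases are really one. Nirenberg's exception is $a=1$ with $1<\gamma<\infty$ and $m-j-n/\gamma\in\{0,1,2,\dots\}$; outside it, $a=1$ is just the Sobolev/Morrey embedding. In the exceptional case the inequality is false, e.g.\ $W^{1,N}\not\subset L^\infty$. So no Lorentz refinement can rescue it, and excluding it is mandatory, not a fallback.
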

\begin{lemma}[\cite{Boccardo1988}\label{lemmaboccardo}]
	Assume that $u^\varepsilon$ converges weakly in $L^p(0,T; W^{1,p}_{\text{L}^d}(\Omega))$ or almost everywhere in $Q$ to $u$, and that
	\[
	\iint_Q \bigl[ a(x, t, \nabla u^\varepsilon) - a(x, t, \nabla u) \bigr] \nabla (u^\varepsilon - u) \to 0.
	\]
	Then
	\[
	\nabla u^\varepsilon \to \nabla u \quad \text{strongly in } (L^p(Q))^N.
	\]
\end{lemma}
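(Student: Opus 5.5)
The statement is the classical Boccardo--Murat--Puel lemma: weak convergence (or a.e. convergence) together with vanishing of the monotonicity integral forces strong convergence of gradients. We assume, as the citation implies, that $(u^\varepsilon)$ is bounded in $L^p(0,T;W^{1,p}_{\Gamma_d}(\Omega))$, so that $u^\varepsilon\rightharpoonup u$ weakly there; a.e. convergence plus this bound yields the weak limit. Set
\[
D_\varepsilon(x,t)=\bigl[a(x,t,\nabla u^\varepsilon)-a(x,t,\nabla u)\bigr]\cdot\nabla(u^\varepsilon-u)\ge 0,
\]
which is nonnegative by \eqref{as4}. The hypothesis says $D_\varepsilon\to0$ in $L^1(Q)$, so along a subsequence $D_\varepsilon\to0$ a.e. in $Q$.

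\emph{Pointwise convergence of gradients.} Fix a point $(x,t)$ off a null set where $D_\varepsilon\to0$, $|\nabla u(x,t)|<\infty$, $b(x,t)<\infty$, and $a(x,t,\cdot)$ is continuous. Expanding $D_\varepsilon$ and using \eqref{as1} and \eqref{as3} gives
\[
D_\varepsilon\ge \alpha|\nabla u^\varepsilon|^p-k\bigl(b+|\nabla u^\varepsilon|^{p-1}\bigr)|\nabla u|-k\bigl(b+|\nabla u|^{p-1}\bigr)|\nabla u^\varepsilon|-C(x,t),
\]
so boundedness of $D_\varepsilon$ forces $|\nabla u^\varepsilon(x,t)|$ to be bounded. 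Take any cluster point $\xi^*$ of $\nabla u^\varepsilon(x,t)$. By continuity of $a$ in $\xi$,
\[
\bigl(a(x,t,\xi^*)-a(x,t,\nabla u)\bigr)\cdot(\xi^*-\nabla u)=0.
\]
Strict monotonicity, namely \eqref{as4} read with strict inequality for $\xi\neq\xi'$ as in the intended hypothesis, then gives $\xi^*=\nabla u(x,t)$. Hence $\nabla u^\varepsilon\to\nabla u$ a.e. in $Q$.

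\emph{From a.e. to strong $L^p$ convergence.} This step is the main obstacle. We apply Vitali's theorem to $|\nabla u^\varepsilon-\nabla u|^p$, which requires equi-integrability of $|\nabla u^\varepsilon|^p$. Write
\[
\alpha|\nabla u^\varepsilon|^p\le a(x,t,\nabla u^\varepsilon)\cdot\nabla u^\varepsilon
= D_\varepsilon+a(x,t,\nabla u^\varepsilon)\cdot\nabla u+a(x,t,\nabla u)\cdot\nabla u^\varepsilon-a(x,t,\nabla u)\cdot\nabla u .
\]
The term $D_\varepsilon$ converges in $L^1(Q)$, hence is equi-integrable. Next, $a(x,t,\nabla u^\varepsilon)$ is bounded in $(L^{p'}(Q))^N$ by \eqref{as3} and converges a.e., so it converges weakly to $a(x,t,\nabla u)$. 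Likewise $\nabla u^\varepsilon\rightharpoonup\nabla u$ weakly in $L^p$.

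Pointwise bounds alone are awkward here, so we argue at the level of integrals. Fix a measurable set $E$. Weak convergence gives
\[
\int_E a(x,t,\nabla u^\varepsilon)\cdot\nabla u\to\int_E a(x,t,\nabla u)\cdot\nabla u,
\qquad
\int_E a(x,t,\nabla u)\cdot\nabla u^\varepsilon\to\int_E a(x,t,\nabla u)\cdot\nabla u .
\]
Therefore
\[
\limsup_{\varepsilon}\int_E a(x,t,\nabla u^\varepsilon)\cdot\nabla u^\varepsilon\le\int_E a(x,t,\nabla u)\cdot\nabla u .
\]
Fatou's lemma and a.e. convergence give the reverse inequality, so $a(x,t,\nabla u^\varepsilon)\cdot\nabla u^\varepsilon\to a(x,t,\nabla u)\cdot\nabla u$ in $L^1(Q)$ by the Brezis--Lieb/Scheffé argument for nonnegative functions. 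Consequently $|\nabla u^\varepsilon|^p$ is equi-integrable, being dominated by $\alpha^{-1}$ times an $L^1$-convergent sequence.

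Vitali's theorem then yields $\nabla u^\varepsilon\to\nabla u$ strongly in $(L^p(Q))^N$ along the subsequence. Since the limit is identified independently of the subsequence, the whole sequence converges.
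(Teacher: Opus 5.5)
Your proof is correct, and it is the standard Boccardo--Murat--Puel argument: a.e.\ convergence of the gradients from $D_\varepsilon\to0$ a.e.\ via the cluster-point argument and strict monotonicity, then $L^1(Q)$ convergence of $a(x,t,\nabla u^\varepsilon)\cdot\nabla u^\varepsilon$ and Vitali through coercivity; the paper does not prove this lemma but only cites it, and you are right to read \eqref{as4} with strict inequality, since for a merely monotone field (e.g.\ one whose first component is constant for $\xi_1\in[1,2]$) gradients oscillating in that range give $\iint_Q D_\varepsilon\to0$ without strong convergence. The boundedness you add is not actually an extra assumption: integrating your pointwise lower bound for $D_\varepsilon$ and using H\"older gives $\iint_Q D_\varepsilon\ge\alpha X^p-C\,(X^{p-1}+X+1)$ with $X=\|\nabla u^\varepsilon\|_{L^p(Q)}$ and $C$ depending only on $k$, $\|b\|_{L^{p'}(Q)}$ and $\|\nabla u\|_{L^p(Q)}$, so the hypothesis $\iint_Q D_\varepsilon\to0$ already bounds $\nabla u^\varepsilon$ in $L^p$ and also covers the ``a.e.'' alternative in the statement.
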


	For $\forall \varepsilon>0$, we consider the following approximate problem
	\begin{equation}\label{approximateequation}
		\begin{cases}
			\dfrac{\partial u^\varepsilon}{\partial t}- \nabla\cdot(a(t,x,\nabla u^\varepsilon)+\Phi_\varepsilon (t,x,u^\varepsilon))=f^\varepsilon& \text{in }\Omega\times (0,T),\\
			(a(t,x,\nabla u^\varepsilon)+\Phi_\varepsilon(t,x,u^\varepsilon))\cdot \mathbf{n}=0 & \text{on } \Gamma _n \times (0,T) ,\\
			u^\varepsilon=0 & \text{on } \Gamma_d \times(0,T),\\
			u^\varepsilon(x,0)=u^\varepsilon_0(x)& \text{in } \Omega.
		\end{cases}
	\end{equation} 
	Here $\Phi _\varepsilon$ is defined as following
	\begin{equation}\label{phiapproximate}
		\Phi _\varepsilon(x,t,s)=\Phi (s,t,T_{\frac{1}{\varepsilon}}(s)),
	\end{equation}
	it satisfies
	\begin{equation}
		\begin{cases}
			|\Phi_{\varepsilon}(x,t,s)|\leq|\Phi(x,t,s)|\leq c(x,t)|s|^{\gamma},\\
			|\Phi_{\varepsilon}(x,t,s)|\leq c(x,t)(\frac{1}{\varepsilon})^{\gamma}.
		\end{cases}
	\end{equation}
	And
	\begin{equation}\label{fst}
		f^{\varepsilon}\in L^{p'}(Q),\quad f^{\varepsilon}\rightarrow f, \text{ strongly in }L^{1}(Q),
	\end{equation}
	\begin{equation}\label{ust}
		u^{\varepsilon}_{0}\in L^{2}(\Omega), \quad u^{\varepsilon}_{o}\rightarrow u_{0},\text{ strongly in } L^{1}(\Omega).
	\end{equation}
	Then by \cite{Lions1969}, it is easy to proof that there exists a weak solution $u^{\varepsilon}\in L^{p}(0,T;W^{1,p}_{\Gamma_{d}}(\Omega))$ of approximate problem \eqref{approximateequation}.

\section{Prior Estimate}
\begin{theorem}
	For $\forall \varepsilon>0$, we have
	\begin{equation}\label{tklpbnd}
		T_{k}(u^{\varepsilon}) \text{ is bounded in } L^{p}(0,T ;W^{1,p}_{\Gamma_{d}}(\Omega)).
	\end{equation}
\end{theorem}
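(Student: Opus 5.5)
We test the approximate problem \eqref{approximateequation} with $T_k(u^\varepsilon)\chi_{(0,\tau)}$, $\tau\in(0,T]$. This is an admissible test function because $u^\varepsilon\in L^p(0,T;W^{1,p}_{\Gamma_d}(\Omega))$ and $\partial_t u^\varepsilon$ lies in the dual space. The time derivative term becomes $\int_\Omega\Theta_k(u^\varepsilon(\tau))\,dx-\int_\Omega\Theta_k(u^\varepsilon_0)\,dx$. Using \eqref{as1} and the bounds $|T_k|\le k$, $\Theta_k(r)\le k|r|$, we obtain
\begin{equation*}
\sup_{\tau}\int_\Omega\Theta_k(u^\varepsilon(\tau))\,dx+\alpha\int_0^T\!\!\int_\Omega|\nabla T_k(u^\varepsilon)|^p
\le k\bigl(\|f^\varepsilon\|_{L^1(Q)}+\|u_0^\varepsilon\|_{L^1(\Omega)}\bigr)+\iint_{\{|u^\varepsilon|<k\}}|\Phi_\varepsilon(x,t,u^\varepsilon)||\nabla T_k(u^\varepsilon)|.
\end{equation*}
By \eqref{fst}--\eqref{ust}, the first term on the right is at most $kC_0$, with $C_0$ independent of $\varepsilon$.

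The whole difficulty is the non-coercive term, since $\Phi$ is not assumed bounded in $u$ but only $|\Phi_\varepsilon|\le c(x,t)|s|^\gamma$. On the set $\{|u^\varepsilon|<k\}$ we have $|\Phi_\varepsilon(x,t,u^\varepsilon)|\le c(x,t)\,k^\gamma$, because $|T_{1/\varepsilon}(s)|\le|s|$. Hölder and Young then give
\begin{equation*}
\iint_Q c\,k^\gamma|\nabla T_k(u^\varepsilon)|\le \frac{\alpha}{2}\|\nabla T_k(u^\varepsilon)\|_{L^p(Q)}^p+C(\alpha,p)\,k^{\gamma p'}\|c\|_{L^{p'}(Q)}^{p'}.
\end{equation*}
This is legitimate because $m=\frac{N+p}{p-1}\ge p'$ and $Q$ is bounded, so $c\in L^{p'}(Q)$. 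Absorbing the first term into the left-hand side yields
\begin{equation*}
\sup_{\tau}\int_\Omega\Theta_k(u^\varepsilon(\tau))\,dx+\frac{\alpha}{2}\|\nabla T_k(u^\varepsilon)\|^p_{L^p(Q)}\le C_0k+C k^{\gamma p'},
\end{equation*}
and the right-hand side is independent of $\varepsilon$. Since the norm on $W^{1,p}_{\Gamma_d}(\Omega)$ is $\|\nabla v\|_{L^p}$, this gives \eqref{tklpbnd} for each fixed $k$.

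The step I expect to be the main obstacle is whether one wants a bound that is \emph{linear} in $k$, of the form $kM+L$. That form is needed later to feed into Lemma \ref{lemmadinardo}, and the crude bound $k^{\gamma p'}$ above is not linear. For the present statement, boundedness for each fixed $k$ suffices, so the above is enough.

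If the sharper linear form is desired, I would instead split $\{|u^\varepsilon|<k\}$ into the level sets $\{h\le|u^\varepsilon|<h+1\}$. On each piece I would estimate $c|u^\varepsilon|^\gamma|\nabla u^\varepsilon|$ using Hölder with the exponents dictated by \eqref{as6}: $c\in L^{m}$, $|u|^\gamma$ in the Marcinkiewicz space from Lemma \ref{lemmadinardo}, and $\nabla u$ in $L^p$. The choice $\gamma=\frac{N+2}{N+p}(p-1)$ is exactly the critical exponent that makes these combine. I would then close the estimate with a smallness argument: split $c=c\chi_{\{c>\lambda\}}+c\chi_{\{c\le\lambda\}}$, so that $\|c\chi_{\{c>\lambda\}}\|_{L^m}$ is small, and run a Gronwall/induction in $h$ as in \cite{DiNardo2011}.
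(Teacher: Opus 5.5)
Your proof is correct, but it takes a different route from the paper's. Both arguments test with $T_k(u^\varepsilon)$; they differ in how the $\Phi$-term is handled.

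The paper exploits the critical structure. It applies Hölder with $c\in L^m(Q)$, the parabolic Gagliardo--Nirenberg inequality and Young's inequality, arriving at $C\|c\|_{L^m(Q)}\bigl(\sup_t\int_\Omega|T_k(u^\varepsilon)|^2\bigr)^{1/m}\bigl(\int_Q|\nabla T_k(u^\varepsilon)|^p\bigr)^{(N+1)/(N+p)}$. It aims at a bound $Mk$ linear in $k$, which it then feeds into Lemma~\ref{lemmadinardo} and uses to derive the uniform $L^\infty(0,T;L^1(\Omega))$ bound.

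You only use $|\Phi_\varepsilon(x,t,u^\varepsilon)|\le c\,k^\gamma$ on $\{|u^\varepsilon|<k\}$ and $c\in L^{p'}(Q)$ (valid since $m>p'$). This gives an $\varepsilon$-independent but superlinear bound $C_0k+Ck^{\gamma p'}$. That is all the statement requires, and it does not depend on the precise value of $\gamma$.

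Your route is also more robust than the paper's. In the paper's Young step, the factor $\bigl(\sup_t\int_\Omega|T_k(u^\varepsilon)|^2\bigr)^{1/m}$ becomes $Ck^{1/m}$ only if $\sup_t\|u^\varepsilon(t)\|_{L^1(\Omega)}$ is already known to be bounded. Otherwise one would need $\|c\|_{L^m(Q)}$ small to absorb it. The paper obtains that $L^1$ bound only afterwards, from the very estimate being proved.

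Your argument in fact repairs this, so the level-set/induction scheme you sketch for the linear form is unnecessary:
\begin{itemize}
\item Take $k=1$ in your estimate and use $\Theta_1(r)\ge|r|-\tfrac12$. This gives $\sup_t\|u^\varepsilon(t)\|_{L^1(\Omega)}\le C$ uniformly in $\varepsilon$.
\item Hence $\sup_t\int_\Omega|T_k(u^\varepsilon)|^2\le k\sup_t\|u^\varepsilon(t)\|_{L^1(\Omega)}\le Ck$.
\item Since $\tfrac1m+\tfrac{N+1}{N+p}=1$, Young's inequality with exponents $m$ and $\tfrac{N+p}{N+1}$ in the paper's Hölder--Gagliardo--Nirenberg chain bounds the $\Phi$-term by $\delta\int_Q|\nabla T_k(u^\varepsilon)|^p+C_\delta k$.
\end{itemize}
This yields $\sup_t\int_\Omega|T_k(u^\varepsilon)|^2+\int_Q|\nabla T_k(u^\varepsilon)|^p\le Mk$, which is exactly the hypothesis of Lemma~\ref{lemmadinardo}.
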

\begin{proof}
		For $\forall k>0$, we test the first equation of \eqref{approximateequation} by $T_{k}(u^{\varepsilon})$ to get
	\begin{equation}\label{testingfunction}
		\begin{aligned}[b]
			&\int ^{T}_{0}\langle \frac {\partial u^\varepsilon}{\partial t},T_{k}(u^{\varepsilon})\rangle \mathrm{d}t+\int^{T}_{0}\int_{\Omega} a(x,t,\nabla u^{\varepsilon})\nabla T_{k}(u^{\varepsilon})\,\mathrm{d}x\mathrm{d}t +\int^{T}_{0}\int_{\Omega} \Phi _{\varepsilon}(x,t,u^{\varepsilon})\nabla T_{k}(u^{\varepsilon})\,\mathrm{d}x\mathrm{d}t \\
			=&\int^{T}_{0}\int_{\Omega} f^{\varepsilon}T_{k}(u^{\varepsilon})\,\mathrm{d}x\mathrm{d}t.
		\end{aligned}
	\end{equation}
	Since 
	\begin{equation}
		\int ^{T}_{0}\langle \dfrac {\partial u^\varepsilon}{\partial t},T_{k}(u^{\varepsilon})\rangle \,\mathrm{d}t
		=\int_{\Omega}\Theta_{k}(u^{\varepsilon})\,\mathrm{d}x
		-\int_{\Omega}\Theta_{k}(u^{\varepsilon}_{0})\,\mathrm{d}x,
	\end{equation}
	and the inequality
	\begin{equation}
		\frac{1}{2}|T_{k}(s)|^2\leq \frac{1}{2}sT_{k}(s)\leq \Theta_{k}(s)\leq k|s|,
	\end{equation}
	the following estimate holds
	\begin{equation}\label{eatimate}
		\int ^{T}_{0}\langle \frac {\partial u^\varepsilon}{\partial t},T_{k}(u^{\varepsilon})\rangle \,\mathrm{d}t\geq \frac{1}{2}\int_{\Omega}u^{\varepsilon}T_{k}(u^{\varepsilon})\,\mathrm{d}x-k\int_{\Omega}|u^{\varepsilon}_{0}|\,\mathrm{d}x.
	\end{equation}
	Substitute \eqref{as1}, \eqref{phiapproximate} and \eqref{eatimate} in to \eqref{testingfunction} then it results
	\begin{equation}\label{c1}
		\begin{aligned}[b]
			&\frac{1}{2}\int_{\Omega}u^{\varepsilon}T_{k}(u^{\varepsilon})\,\mathrm{d}x	+\alpha\int_{0}^{T}\int_{\Omega}\left|\nabla T_{k}(u^\varepsilon))\right|^{p}\,\mathrm{d}x\mathrm{d}t\\
			\leq& k\int_{0}^{T}\int_{\Omega}\left|f^\varepsilon\right|\,\mathrm{d}x\mathrm{d}t
			+k\int_{\Omega}\left|u^{\varepsilon}_{0}\right|\,\mathrm{d}x
			+\int_{0}^{T}\left|\Phi_{\varepsilon}(x,t,u^\varepsilon)\right|\left|\nabla T_{k}(u^{\varepsilon})\right|\,\mathrm{d}x\mathrm{d}t\\
			\leq& k C_{1}+\int^{T}_{0}\int_{\Omega}|\Phi(x,t,T_{k}(u^\varepsilon))||\nabla T_{k}(u^{\varepsilon})|\,\mathrm{d}x\mathrm{d}t\\
			\leq& kC_{1}+\int_{0}^{T}\int_{\Omega} c(x,t)\left|T_{k}(u^{\varepsilon})\right|^{\gamma}\left|\nabla T_{k}(u^\varepsilon)\right| \,\mathrm{d}x\mathrm{d}t.
		\end{aligned}
	\end{equation}
	Then, using Hölder's inequality, the Gagliardo-Nirenberg inequality and Young's inequality, we obtain
	\begin{equation}\label{c2c3}
		\begin{aligned}[b]
			&\int_{0}^{T}\int_{\Omega}c(x,t)\left|T_{k}(u^{\varepsilon})\right|^{\gamma}\left|\nabla T_{k}(u^{\varepsilon})\right|\,\mathrm{d}x\mathrm{d}t\\
			\leq& \left(\int_{0}^{T}\int_{\Omega}\left|c(x,t)\right|^{p'}\left|T_{k}(u^\varepsilon)\right|^{\gamma p'}\,\mathrm{d}x\mathrm{d}t\right)^{\frac{1}{p'}}\left(\int_{0}^{T}\int_{\Omega}\left|\nabla T_{k}(u^{\varepsilon})\right|^{p}\,\mathrm{d}x\mathrm{d}t\right)^{\frac{1}{p}}\\
			\leq&\left(\int_{0}^{T}\int_{\Omega}\left|c(x,t)\right|^mdxdt\right)^{\frac{1}{m}}\left(\int_{0}^{T}\int_{\Omega}
			\left|T_{k}(u^{\varepsilon})\right|^{\frac{p(N+2)}{N}}\,\mathrm{d}x\mathrm{d}t\right)^{\frac{N(p-1)}{p(N+p)}}
			\left(\int_{0}^{T}\int_{\Omega}\left|\nabla T_{k}(u^{\varepsilon})\right|^{p}\,\mathrm{d}x\mathrm{d}t\right)^{\frac{1}{p}}\\
			\leq& C_{2}\|c(x,t)\|_{L^{m}(Q)}\left(\sup_{t\in(0,T)}\int_{\Omega}\left|T_{k}(u^{\varepsilon})\right|^{2}\,\mathrm{d}x\right)^{\frac{1}{m}}
			\left(\int_{0}^{T}\int_{\Omega}\left|\nabla T_{k}(u^{\varepsilon})\right|^{p}\,\mathrm{d}x\mathrm{d}t\right)^{\frac{N+1}{N+p}}\\
			\leq& C_{3}k+\frac{N+1}{N+p}\int_{0}^{T}\int_{\Omega}\left|\nabla T_{k}(u^{\varepsilon})\right|^{p}\,\mathrm{d}x\mathrm{d}t,
		\end{aligned}
	\end{equation}
	where $C_{2}=C_{2}(N,p)$ and $ C_{3}=C_{3}(N,p,\|c(x,t)\|_{L^{m}(Q)})$. Through \eqref{c1} and \eqref{c2c3} we conclude
	\begin{equation*}
		\frac{1}{2}\sup_{t\in(0,T)}\int_{\Omega}\left|T_{k}(u^{\varepsilon})\right|^{2}\,\mathrm{d}x+\int_{0}^{T}
		\int_{\Omega}\left|\nabla T_{k}(u^{\varepsilon})\right|^{p}\,\mathrm{d}x\mathrm{d}t\leq Mk,
	\end{equation*}
	since
	\begin{equation*}
		\sup_{t\in(0,T)}\int_{\Omega}\left|T_{k}(u^{\varepsilon})\right|^2\,\mathrm{d}x\leq\sup_{t\in(0,T)}\int_{\Omega}u^{\varepsilon}T_{k}(u^{\varepsilon})\,\mathrm{d}x,
	\end{equation*}
	then
	\begin{equation}\label{mk}
		\frac{1}{2}\sup_{t\in(0,T)}\int_{\Omega}u^{\varepsilon}T_{k}(u^{\varepsilon})\,\mathrm{d}x+\int_{0}^{T}
		\int_{\Omega}\left|\nabla T_{k}(u^{\varepsilon})\right|^{p}\,\mathrm{d}x\mathrm{d}t\leq Mk,
	\end{equation}
	holds. Here $M=C_{3}+\|f^{\varepsilon}\|_{L^{1}(Q)}+\|u^{\varepsilon}_{0}\|_{L^{1}(\Omega)}$. Using Lemma\ref{lemmadinardo}, we compute that:
	\begin{equation}
		\|\left|u^{\varepsilon}\right|^{p-1}\|_{L^{m}(Q)}\leq C_{4}M^{\frac{(p-1)(N+p)}{p(N+1)-N}}\left|Q\right|^{\frac{1}{p'}\frac{N}{N+p}}\,m<\frac{p(N+1)-N}{N(p-1)},
	\end{equation}
	similarly,
	\begin{equation}
		\|\left|\nabla u^{\varepsilon}\right|^{p-1}\|_{L^{s}(Q)}\leq C_{4}M^{\frac{(p-1)(N+2)}{p(N+1)-N}}\,<\frac{p(N+1)-N}{(N+1)(p-1)},
	\end{equation}
	where
	\begin{equation}
		C_{4}=C_{4}(N,p,\|c\|_{L^{m}(Q)},\|f^{\varepsilon}\|_{L^{1}(Q)},\|u^{\varepsilon}_{0}\|_{L^{1}(\Omega)},\left|Q\right|).
	\end{equation}
	By using the technique of partition of the domain, we get
	\begin{equation}
		\begin{aligned}[b]
			\int_{\Omega}|u^{\varepsilon}|\,\mathrm{d}x
			\leq&\int_{{|u^{\varepsilon}\leq1|}\bigcup{|u^{\varepsilon}|>1}}\left|u^{\varepsilon}\right|\,\mathrm{d}x\\
			\leq&\int_{\Omega}1\,\mathrm{d}x+\int_{\Omega}u^{\varepsilon}T_{1}(u^{\varepsilon})\,\mathrm{d}x\\
			\leq&\left|\Omega\right|+\int_{\Omega}u^{\varepsilon}T_{1}(u^{\varepsilon})\,\mathrm{d}x,
		\end{aligned}
	\end{equation}
	then
	\begin{equation}\label{domainconverge}
		\|u^{\varepsilon}\|_{L^{\infty}((0/T);L^{1}(\Omega))}\leq\left|\Omega\right|+\sup_{t\in(0,T)}\int_{\Omega}u^{\varepsilon}T_{1}(u^{\varepsilon})\,x.
	\end{equation}
	From \eqref{c1}-\eqref{domainconverge} we obtain
	\begin{equation}\label{c5}
		\|u^{\varepsilon}\|_{L^{\infty}((0,T);L^{1}(\Omega))}\leq C_{5},
	\end{equation}
	here 
	\begin{equation}
		C_{5}=C_{5}(N,p,|Q|,\|c(x,t)\|_{L^{m}(Q)},\|u^{\varepsilon}_{0}\|_{L^{1}(\Omega)},\|f^{\varepsilon}\|_{L^{p'}(Q)}).
	\end{equation}
	And from \eqref{mk}, it is clear to see that for $\forall \varepsilon>0$
	\begin{equation}
		T_{k}(u^{\varepsilon}) \text{ is bounded in } L^{p}(0,T ;W^{1,p}_{\Gamma_{d}}(\Omega)).
	\end{equation}
\end{proof}
\begin{theorem}
	For any pointwise \(C^1\) function \(S\) such that \(S'\) has compact support, the term
	\begin{equation}\label{stbnd}
		\dfrac{\partial S(u^{\varepsilon})}{\partial t} \text{ is bounded in } L^{1}(Q)+L^{p'}(0,T;W^{-1,p'}(\Omega)).
	\end{equation}
\end{theorem}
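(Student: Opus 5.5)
The plan is the standard renormalization argument. Fix $S$ with $\operatorname{supp} S'\subset[-M,M]$ for some $M>0$. Since $u^\varepsilon\in L^p(0,T;W^{1,p}_{\Gamma_d}(\Omega))$ is a weak solution of \eqref{approximateequation}, and $S'(u^\varepsilon)$, $S''(u^\varepsilon)$ are bounded and vanish where $|u^\varepsilon|>M$, we may multiply the equation by $S'(u^\varepsilon)\psi$ with $\psi$ smooth. After a chain-rule argument, justified by the usual Steklov-average or regularization in time, this gives in $\mathcal D'(Q)$
\begin{equation*}
\begin{aligned}
\frac{\partial S(u^\varepsilon)}{\partial t}
=&\ \nabla\cdot\bigl(S'(u^\varepsilon)a(x,t,\nabla u^\varepsilon)\bigr)
-S''(u^\varepsilon)a(x,t,\nabla u^\varepsilon)\nabla u^\varepsilon
+\nabla\cdot\bigl(S'(u^\varepsilon)\Phi_\varepsilon(x,t,u^\varepsilon)\bigr)\\
&-S''(u^\varepsilon)\Phi_\varepsilon(x,t,u^\varepsilon)\nabla u^\varepsilon
+fS'(u^\varepsilon)+(f^\varepsilon-f)S'(u^\varepsilon).
\end{aligned}
\end{equation*}
The last two terms combine into $f^\varepsilon S'(u^\varepsilon)$. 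We then bound each term either in $L^{p'}(0,T;W^{-1,p'}(\Omega))$ or in $L^1(Q)$, uniformly in $\varepsilon$. Throughout, every term lives on the set $\{|u^\varepsilon|\le M\}$, so we may replace $u^\varepsilon$ by $T_M(u^\varepsilon)$ and $\nabla u^\varepsilon$ by $\nabla T_M(u^\varepsilon)$.

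\emph{Divergence terms.} By \eqref{as3},
\[
|S'(u^\varepsilon)a(x,t,\nabla u^\varepsilon)|\le \|S'\|_\infty k\bigl(b+|\nabla T_M(u^\varepsilon)|^{p-1}\bigr).
\]
This is bounded in $L^{p'}(Q)$ by \eqref{tklpbnd}, so its divergence is bounded in $L^{p'}(0,T;W^{-1,p'}(\Omega))$. For the lower-order term we use \eqref{as5} and the first inequality for $\Phi_\varepsilon$ to get
\[
|S'(u^\varepsilon)\Phi_\varepsilon(x,t,u^\varepsilon)|\le \|S'\|_\infty M^\gamma c(x,t).
\]
Since $m=\frac{N+p}{p-1}\ge p'$ (because $N+p\ge p$) and $Q$ is bounded, $c\in L^{p'}(Q)$. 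Hence this divergence is also bounded in $L^{p'}(0,T;W^{-1,p'}(\Omega))$.

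\emph{$L^1$ terms.} First,
\[
|S''(u^\varepsilon)a(x,t,\nabla u^\varepsilon)\nabla u^\varepsilon|\le\|S''\|_\infty |a(x,t,\nabla T_M(u^\varepsilon))||\nabla T_M(u^\varepsilon)|.
\]
By \eqref{as3}, Young's inequality and \eqref{tklpbnd} (equivalently \eqref{mk} with $k=M$), this is bounded in $L^1(Q)$ by a constant times $(\|b\|_{p'}^{p'}+M\cdot\mathrm{const})$. Next,
\[
|S''(u^\varepsilon)\Phi_\varepsilon\nabla u^\varepsilon|\le\|S''\|_\infty c\,|T_M(u^\varepsilon)|^\gamma|\nabla T_M(u^\varepsilon)|\le \|S''\|_\infty M^\gamma c\,|\nabla T_M(u^\varepsilon)|.
\]
This is bounded in $L^1$ by Hölder's inequality, since $c\in L^{p'}(Q)$ and $\nabla T_M(u^\varepsilon)$ is bounded in $L^p$. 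Alternatively one can reuse the estimate \eqref{c2c3} with $k=M$. Finally, $\|f^\varepsilon S'(u^\varepsilon)\|_{L^1}\le\|S'\|_\infty\|f^\varepsilon\|_{L^1}$, which is bounded by \eqref{fst}.

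The main obstacle is the rigorous justification of the renormalized identity for $u^\varepsilon$. Since $\partial_t u^\varepsilon\in L^{p'}(0,T;W^{-1,p'}(\Omega))$ only, using $S'(u^\varepsilon)\psi$ as a test function requires the chain rule for time derivatives in the duality $W^{-1,p'}$–$W^{1,p}\cap L^\infty$. I would cite the standard integration-by-parts lemma for $u\in L^p(0,T;W^{1,p}_{\Gamma_d})$ with $u_t\in L^{p'}(0,T;W^{-1,p'})$ and test functions $S'(u)\psi$; a Steklov time regularization is the fallback if needed. Everything else is the direct bookkeeping above, with constants depending only on $M$, $S$, $\|b\|_{p'}$, $\|c\|_m$, $|Q|$ and the bound from \eqref{mk}.
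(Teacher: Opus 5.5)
Your proposal is correct and follows essentially the same route as the paper. You renormalize the approximate equation with $S'(u^\varepsilon)$, place $\nabla\cdot(S'(u^\varepsilon)a)$ and $\nabla\cdot(S'(u^\varepsilon)\Phi_\varepsilon)$ in $L^{p'}(0,T;W^{-1,p'}(\Omega))$ via \eqref{as3}, $c\in L^{p'}(Q)$ and \eqref{tklpbnd}, and place the $S''$-terms and $f^\varepsilon S'(u^\varepsilon)$ in $L^1(Q)$. Your bookkeeping is in fact cleaner than the paper's: the bound $|\Phi_\varepsilon(x,t,s)|\le c|s|^\gamma$ removes the need for the restriction $\varepsilon<1/M$, you justify $c\in L^{p'}(Q)$ from $m\ge p'$, and you flag the chain-rule justification that the paper takes for granted.
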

\begin{proof}
	Similarly to the above, we need to test \eqref{approximateequation} by \(\partial S(u^{\varepsilon})\) to obtain
	\begin{equation}
		\begin{aligned}[b]
			\frac{\partial S(u^{\varepsilon})}{\partial t}&-\nabla\cdot(a(x,t,\nabla u^{\varepsilon})S(u^{\varepsilon})) +S''(u^{\varepsilon})a(x,t,\nabla u^{\varepsilon})\nabla u^{\varepsilon} \\
			-\nabla&\cdot(\Phi_{\varepsilon}(x,t,u^{\varepsilon})S(u^{\varepsilon}))+S'(u^{\varepsilon})\Phi_{\varepsilon}(x,t,u^{\varepsilon})\nabla u^{\varepsilon} \\
			&=f^{\varepsilon}S'(u^{\varepsilon}).
		\end{aligned}
	\end{equation}
	Then from \eqref{tklpbnd} and the property of \(a(x,t,\nabla u^{\varepsilon})\) we see
	\begin{equation}
		\begin{aligned}[b]
			-\nabla\cdot(a(x,t,\nabla u^{\varepsilon}))S'(u^{\varepsilon})+&S''{u^{\varepsilon}}a(x,t,\nabla u^{\varepsilon})\nabla u^{\varepsilon}-f^{\varepsilon}S'(u^{\varepsilon}) \\
			\text{ is bounded in } L^{1}(Q)+&L^{p'}(0,T;W^{-1,p'}(\Omega)), \text{ independently of } \varepsilon.
		\end{aligned}
	\end{equation}
	Since $\text{supp} S'\subset[-k,k]$, for \(0<\varepsilon<\dfrac{1}{k}\) we use Hölder's inequality to see
	\begin{equation}
		\begin{aligned}[b]
			&\int_{0}^{T}\int_{\Omega\bigcap\{|u^\varepsilon|\leq k\}}\nabla\cdot(\Phi_{\varepsilon}(x,t,u^{\varepsilon})S(u^{\varepsilon}))\varphi \,\mathrm{d}x\mathrm{d}t \\
			\leq&\left(\int_{0}^{T}\int_{\Omega\bigcap\{|u^\varepsilon|\leq k\}}|\Phi_{\varepsilon}(x,t,u^{\varepsilon})S(u^{\varepsilon})|^{p'}\,\mathrm{d}x\mathrm{d}t\right)^{\frac{1}{p'}}\left(\int_{0}^{T}\int_{\Omega\bigcap\{|u^\varepsilon|\leq k\}}|\nabla\varphi|^{p}\,\mathrm{d}x\mathrm{d}t\right)^{\frac{1}{p}},
		\end{aligned}
	\end{equation}
	and through \eqref{tklpbnd}
	\begin{equation}
		\begin{aligned}[b]
			\int_{0}^{T}\int_{\Omega\bigcap\{|u^\varepsilon|\leq k\}}|\Phi_{\varepsilon}(x,t,u^{\varepsilon})S(u^{\varepsilon})|^{p'}\,\mathrm{d}x\mathrm{d}t
			\leq&\int_{0}^{T}\int_{\Omega}|c(x,t)|^{p'}|T_{k}(u^{\varepsilon})|^{\gamma p'}|S(u^{\varepsilon})|^{p'}\,\mathrm{d}x\mathrm{d}t\\  
			\leq& k^{\gamma p'}\|c(x,t)\|^{p'}_{L^{p'}(Q)}\|S(u^{\varepsilon})\|^{p'}_{L^\infty(Q)}\\
			\leq& C_{6},
		\end{aligned}
	\end{equation}
	where 
	\[C_{6}=C_{6}(N,p,k,\|S\|_{L^{\infty}(Q)},\|c(x,t)\|_{L^{m}(Q)}). \]
	In the same way, combining \eqref{phiapproximate} and \eqref{tklpbnd} yields
	\begin{equation}
		\begin{aligned}[b]
			\int^{T}_{0}\int_{\Omega}|S''(u^{\varepsilon})\Phi_{\varepsilon}(x,t,u^{\varepsilon})||\nabla u^{\varepsilon}|\,\mathrm{d}x\mathrm{d}t
			=&\int^{T}_{0}\int_{\Omega}|S''(u^{\varepsilon})\Phi(x,t,T_{k}(u^{\varepsilon}))||\nabla T_{k}(u^{\varepsilon})|\,\mathrm{d}x\mathrm{d}t\\
			\leq&\int_{0}^{T}\int_{\Omega}|S''(u^{\varepsilon})|c(x,t)|T_{k}(u^{\varepsilon})|^{\gamma}|\nabla T_{k}(u^{\varepsilon})|\,\mathrm{d}x\mathrm{d}t\\
			\leq& k^{\gamma p'}\|c(x,t)\|_{L^{m}(Q)}\|S''\|_{L^{\infty}}\|\nabla T_{k}(u^{\varepsilon})\|_{L^{p}(Q)} \\
			\leq& C_{7},
		\end{aligned}
	\end{equation}
	where
	\[ C_{7}=C_{7}(N,p,k,\|S''\|_{L^{\infty}(Q)},\|c(x,t)\|_{L^{m}(Q)}).\]
	Then we obtain
	\begin{equation*}\\
		\nabla\cdot(\Phi_{\varepsilon}(x,t,u^{\varepsilon}))S'(u^{\varepsilon})-S''(u^{\varepsilon})\Phi_{\varepsilon}(x,t,u^{\varepsilon})\nabla u^{\varepsilon} \text{ is bounded in } L^{1}(Q)+L^{p'}(0,T;W^{-1,p'}(\Omega)).
	\end{equation*}
	It results
	\begin{equation}
		\dfrac{\partial S(u^{\varepsilon})}{\partial t} \text{ is bounded in } L^{1}(Q)+L^{p'}(0,T;W^{-1,p'}(\Omega)), \text{ independently of } \varepsilon.
	\end{equation}
\end{proof}
	From \eqref{tklpbnd}, \eqref{stbnd} and Aubin type Lemma\cite{Simon1987}, it follows that there exists a subsequence of \(u^\varepsilon\), still denoted by \(u^\varepsilon\) such that
	\begin{equation}\label{uuq}
		u^{\varepsilon}\rightarrow u\quad a.e.(x,t)\in Q.
	\end{equation}
	Also, from \eqref{tklpbnd}, there exists a subsequence of \(u^\varepsilon\), still denoted by \(u^\varepsilon\) such that
	\begin{equation}\label{tklp}
		T_{k}(u^{\varepsilon})\rightharpoonup T_{k}(u)\text{ in }L^{p}(0.T;W^{1,p}_{\Gamma_{d}}(\Omega)).
	\end{equation}
	Since $u^\varepsilon$ is uniformly bounded in $L^\infty(0,T;L^1(\Omega))$ with respect to $\varepsilon$, it follows that 
	$$u \in L^\infty(0,T;L^1(\Omega))$$.
\section{Existence of Renormalized Solution}
	We have already known that, for $\forall k,h>0$
	\begin{equation}
		\nabla T_{k}(u^{\varepsilon})\rightharpoonup \nabla T_{k}(u)\text{ in }\left(L^{p}(Q)\right)^{N},
	\end{equation}
	and
	\begin{equation}
		\nabla T_{2k}(u^{\varepsilon}-T_{h}(u^{\varepsilon}))\rightharpoonup \nabla T_{2k}(u-T_{h}(u))\text{ in }\left(L^{p}(Q)\right)^{N}.
	\end{equation}
	Next, we will conclude Theorem \eqref{tkstg} with the help of the function $\eta_{\mu,j}(u)$ defined as follows.
\begin{definition}
		Let $\eta_{\mu,j}(u)$ be a smooth approximation of the truncation function $T_k(u)$, introduced to handle non-zero data. This function satisfies the following properties:
		\begin{equation}
			\begin{aligned}[b]
				\eta_{\mu,j}(u)_{t}&=\frac{\partial}{\partial t}(T_{k}(u))_{\mu}-\mu e^{-\mu t}T_{k}(\psi_{j})\\
				&=\mu(T_{k}(u)-(T_{k}(u)_{\mu}))-e^{-\mu t}T_{k}(\psi_{j})\\
				&=\mu(T_{k}(u)-\eta_{\mu,j}(u)),
			\end{aligned}
		\end{equation}
		and
		\begin{equation}
			\eta_{\mu,j}(u)(0)=T_{k}(\psi_{j}).
		\end{equation}
		We have the following estimate:
		\begin{equation}
			\begin{aligned}[b]
				|\eta_{\mu,j}(u)|&\leq|(T_{k}(u))_{\mu}|+|\exp(-\mu t)T_{k}(\psi_{j})|\\
				&\leq k(1-\exp(-\mu t))+k\exp(-\mu t)\\
				&=k.
			\end{aligned}
		\end{equation}
		Moreover, it enjoys the following nice convergence properties:
		\begin{equation}
			\eta_{\mu,j}(u)\rightarrow T_{k}(u) \text{ strongly in }L^{p}(0,T;W^{1,p}_{\Gamma_{d}}(\Omega)),
		\end{equation}
		\begin{equation}
			\nabla\eta_{\mu,j}(u)\rightarrow\nabla T_{k}(u)\text{ strongly in }\left(L^{p}(Q)\right)^{N}.
		\end{equation}
\end{definition}
\begin{theorem}\label{tkstg}
		The sequence \(\nabla T_k(u^\epsilon)\) above satisfies
		\begin{equation}
			\nabla T_k(u^\epsilon) \to \nabla T_k(u) \text{ strongly in } (L^p(Q))^N.
		\end{equation}
\end{theorem}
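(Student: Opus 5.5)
Strong convergence will follow from Lemma \ref{lemmaboccardo}. Since $T_k(u^\varepsilon)\rightharpoonup T_k(u)$ weakly in $L^p(0,T;W^{1,p}_{\Gamma_d}(\Omega))$ by \eqref{tklp} and $u^\varepsilon\to u$ a.e. by \eqref{uuq}, it suffices to prove
\[
\limsup_{\varepsilon\to0}\int_0^T\!\!\int_\Omega \bigl[a(x,t,\nabla T_k(u^\varepsilon))-a(x,t,\nabla T_k(u))\bigr]\cdot\nabla\bigl(T_k(u^\varepsilon)-T_k(u)\bigr)\,\mathrm{d}x\mathrm{d}t\le 0 .
\]
By \eqref{as3}, $a(x,t,\nabla T_k(u^\varepsilon))$ is bounded in $(L^{p'}(Q))^N$. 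Moreover $\nabla T_k(u)\chi_{\{|u^\varepsilon|>k\}}\to 0$ strongly in $L^p$. Hence it is enough to show
\[
\limsup_{\varepsilon\to 0}\iint_Q a(x,t,\nabla T_k(u^\varepsilon))\cdot\nabla T_k(u^\varepsilon)\le \iint_Q \sigma_k\cdot\nabla T_k(u),
\]
where $\sigma_k$ is the weak $L^{p'}$ limit of $a(x,t,\nabla T_k(u^\varepsilon))$.

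\textbf{Step 1: cut off the large values.} Let $h>k$ and $S_h'=1$ on $[-h,h]$, $0$ outside $[-h-1,h+1]$, linear in between. Test \eqref{approximateequation} with $S_h'(u^\varepsilon)\bigl(T_k(u^\varepsilon)-\eta_{\mu,j}(u)\bigr)$. Here $\eta_{\mu,j}$ is the Landes-type regularization, with $\psi_j$ smooth approximations of $u_0$. This yields the following terms:
\begin{itemize}
\item a time-derivative term;
\item the principal term $\iint a(\nabla u^\varepsilon)\nabla(T_k(u^\varepsilon)-\eta_{\mu,j})S_h'$;
\item an error term $\iint S_h''\,a(\nabla u^\varepsilon)\nabla u^\varepsilon\,(T_k(u^\varepsilon)-\eta_{\mu,j})$;
\item the two $\Phi_\varepsilon$ terms;
\item the right-hand side $\iint f^\varepsilon S_h'(u^\varepsilon)(T_k(u^\varepsilon)-\eta_{\mu,j})$.
\end{itemize}
The error term is bounded by $2k\cdot\frac1{1}\iint_{\{h\le|u^\varepsilon|\le h+1\}}a(\nabla u^\varepsilon)\nabla u^\varepsilon$. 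This quantity tends to $0$ as $h\to\infty$, uniformly in $\varepsilon$. To see this, test with $T_1(u^\varepsilon-T_h(u^\varepsilon))$ and use \eqref{as1}, \eqref{c2c3}, the $L^1$ convergence \eqref{fst}, \eqref{ust}, and the equi-integrability of $f^\varepsilon$. On $\{|u|>h\}$ we use the estimate \eqref{c5}.

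\textbf{Step 2: pass to the limit in the lower-order terms.} On the support of $S_h'$ we have $|u^\varepsilon|\le h+1$. So for $\varepsilon<1/(h+1)$ we get $\Phi_\varepsilon(u^\varepsilon)=\Phi(x,t,T_{h+1}(u^\varepsilon))$. This is dominated by $c(x,t)(h+1)^\gamma\in L^{m}\subset L^{p'}$ and converges a.e. It therefore converges strongly in $L^{p'}$ by Vitali. It is paired with $\nabla(T_k(u^\varepsilon)-\eta_{\mu,j})S_h'$ (weakly convergent in $L^p$) and with $S_h''\nabla u^\varepsilon(\dots)$, which is bounded through $\nabla T_{h+1}(u^\varepsilon)$. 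In the limit $\varepsilon\to0$ these give $\iint \Phi(T_{h+1}(u))\nabla(T_k(u)-\eta_{\mu,j})S_h'(u)+\dots$. These vanish as $\mu\to\infty$, because $\eta_{\mu,j}\to T_k(u)$ strongly in $L^p(0,T;W^{1,p}_{\Gamma_d})$. The $f^\varepsilon$ term converges by dominated convergence, since $|T_k-\eta_{\mu,j}|\le 2k$ and $f^\varepsilon\to f$ in $L^1$. It also tends to $0$ as $\mu\to\infty$, because $\eta_{\mu,j}\to T_k(u)$ a.e.

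\textbf{Step 3: the time-derivative term.} This is the main obstacle. We need
\[
\liminf_{\mu\to\infty}\liminf_{j}\liminf_{\varepsilon\to0}\int_0^T\bigl\langle \partial_t u^\varepsilon, S_h'(u^\varepsilon)(T_k(u^\varepsilon)-\eta_{\mu,j})\bigr\rangle\ge -\omega(h),
\]
with $\omega(h)\to0$. I would follow Porretta/Di Nardo. On $\{|u^\varepsilon|\le k\}$ we have $S_h'=1$. Write $\langle\partial_t u^\varepsilon,\cdot\rangle$ via the primitive $\int_0^{s}S_h'(r)(T_k(r)-\eta)\,dr$. Then use $(\eta_{\mu,j})_t=\mu(T_k(u)-\eta_{\mu,j})$: this derivative has the sign of $T_k(u)-\eta_{\mu,j}$, which makes the remaining term nonnegative after $\varepsilon\to0$. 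Finally use $\eta_{\mu,j}(0)=T_k(\psi_j)$ with $\psi_j\to u_0$, so that the initial contributions vanish as $j\to\infty$.

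\textbf{Step 4: conclude.} Combining the steps, and noting $S_h'(u^\varepsilon)=1$ on $\{|u^\varepsilon|\le k\}$, we obtain
\[
\limsup_\varepsilon\iint a(\nabla T_k(u^\varepsilon))\nabla T_k(u^\varepsilon)\le \lim_{\mu}\iint\sigma_k\nabla\eta_{\mu,j}+\omega(h)=\iint\sigma_k\nabla T_k(u)+\omega(h).
\]
The pieces on $\{k<|u^\varepsilon|\le h+1\}$ involve $a(\nabla u^\varepsilon)\nabla\eta_{\mu,j}$ there. Their limit is $\iint\sigma_{h+1}\nabla T_k(u)\chi_{\{|u|>k\}}=0$, since $\nabla T_k(u)=0$ on that set. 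Letting $h\to\infty$ and applying Lemma \ref{lemmaboccardo} gives the claim.
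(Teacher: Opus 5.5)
Your test function $S_h'(u^\varepsilon)\bigl(T_k(u^\varepsilon)-\eta_{\mu,j}\bigr)$ is a Blanchard--Murat--Redwane-type choice. It differs from the paper's Porretta-type function $T_{2k}\bigl(u^\varepsilon-T_h(u^\varepsilon)+T_k(u^\varepsilon)-\eta_{\mu,j}(u)\bigr)$. Your Step~2 is fine: for fixed $h$, every $\Phi_\varepsilon$-term carries $T_k(u^\varepsilon)-\eta_{\mu,j}$ or its gradient, so it vanishes as $\mu\to\infty$. The gap is in Step~1. With a cutoff of unit slope on $[h,h+1]$ you need $\lim_{h}\limsup_{\varepsilon}\iint_{\{h\le|u^\varepsilon|\le h+1\}}a(x,t,\nabla u^\varepsilon)\cdot\nabla u^\varepsilon=0$, and testing with $T_1(u^\varepsilon-T_h(u^\varepsilon))$ does not give this. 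The $\Phi$-contribution there is $\iint_{\{h<|u^\varepsilon|<h+1\}}c\,|u^\varepsilon|^\gamma|\nabla u^\varepsilon|$, and $\gamma=\frac{N+2}{N+p}(p-1)$ is exactly critical. Hölder, Gagliardo--Nirenberg applied to $T_{h+1}(u^\varepsilon)$, and \eqref{mk} at level $h+1$ (the argument of \eqref{c2c3}) bound it by $C\|c\|_{L^m(\{|u^\varepsilon|>h\})}(M(h+1))^{1/p'}\|\nabla T_1(u^\varepsilon-T_h(u^\varepsilon))\|_{L^p(Q)}$. After Young, the remainder is $CM(h+1)\|c\|^{p'}_{L^m(\{|u^\varepsilon|>h\})}$, which is only $o(h)$, not $o(1)$. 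Nothing gives a rate for the $L^m$-tail of $c$, and \eqref{c5} does not help. At this growth the only estimate at infinity available is the averaged one, $\lim_n\limsup_\varepsilon\frac1n\iint_{\{|u^\varepsilon|<n\}}|\nabla u^\varepsilon|^p=0$ from Lemma~\ref{up0}, which is why the definition uses \eqref{iintau0}.

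The repair is to use a cutoff of slope $1/h$, for instance $S_h'(s)=1-|T_{2h}(s)-T_h(s)|/h$. Then $|S_h''|\le 1/h$ on $\{h\le|s|\le 2h\}$, and your error term is at most $\frac{2k}{h}\iint_{\{|u^\varepsilon|<2h\}}a(x,t,\nabla u^\varepsilon)\cdot\nabla u^\varepsilon$. This vanishes as $\varepsilon\to0$ and then $h\to\infty$ by Lemma~\ref{up0}. Its proof tests with $T_n(u^\varepsilon)/n$, bounds $\frac1n\iint a(x,t,\nabla u^\varepsilon)\cdot\nabla T_n(u^\varepsilon)$, and does not use Theorem~\ref{tkstg}. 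Steps~2--4 then go through with $T_{2h}$ in place of $T_{h+1}$ and $\varepsilon<1/(2h)$, with the time-derivative argument of Step~3 run using this cutoff. Once repaired, your route handles the lower-order term more cleanly than the paper's. With Porretta's test function the gradient lives on $\{|u^\varepsilon|<h+4k\}$, so the paper's bound of $\Phi_\varepsilon(x,t,u^\varepsilon)$ by $c|T_k(u^\varepsilon)|^\gamma$ in $J_2$ is not justified there, and the same difficulty at level $h$ must be faced. In your scheme that difficulty is confined to the single $S_h''\,a(x,t,\nabla u^\varepsilon)\cdot\nabla u^\varepsilon$ term, which Lemma~\ref{up0} controls.
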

\begin{proof}
		We employ a regularization technique\cite{Landes1981} to deal with the time derivative of the truncation function. For $\mu > 0$, we define a temporal regularization of the function $T_k(u)$:
	\begin{equation}
		\left(T_{k}(u)\right)_{\mu}(x,t)\coloneqq\mu\int^{t}_{-\infty}e^{\mu(s-t)}T_{k}(u(x,s))\,\mathrm{d}s
	\end{equation}
	For $s < 0$, naturally, we extend $T_{k}(u)$ to be $0$. It is obvious that
	\[
	\left(T_{k}(u)\right)_{\mu}\in L^{p}(0,T;W^{1,p}_{\Gamma_{d}}(\Omega))\bigcap L^{\infty}(Q)
	\]
	and
	\[
	\nabla\left(T_{k}(u)\right)_{\mu}\in\left(L^{p}(Q)\right)^{N}
	\]
	Then for $a.e.\,t\in(0,T)$ we have
	\begin{equation}
		\begin{aligned}[b]
			|\left(T_{k}(u)\right)_{\mu}|&\leq\mu\int^{t}_{-\infty}\left|e^{\mu(s-t)}T_{k}(u(x,s))\right|\,\mathrm{d}s\\
			&\leq k\int^{t}_{-\infty}e^{\mu(s-t)}\,(\mu(s-t))\\ 
			&\leq k(1-e^{-\mu t}),
		\end{aligned}
	\end{equation}
	and
	\begin{equation}
		\begin{aligned}[b]
			\frac{\partial \left(T_{k}(u)\right)_{\mu}}{\partial t}
			&=\mu\frac{\partial}{\partial t}\int^{t}_{-\infty}e^{\mu(s-t)}T_{k}(u(x,s))\,\mathrm{d}s\\
			=&\mu\frac{\partial}{\partial t}e^{-\mu t}\int^{t}_{-\infty}e^{\mu s}T_{k}(u(x,s))\,\mathrm{d}s\\
			=&-\mu^{2}e^{-\mu t}\int^{t}_{-\infty}e^{\mu s}T_{k}(u(x,s))\,s+\mu e^{-\mu t}e^{\mu t}T_{k}(u(x,t))\\
			=&\mu\left(T_{k}(u)-\left(T_{k}(u)\right)_{\mu}\right).
		\end{aligned}
	\end{equation}
	Then
	\begin{equation}
		\begin{aligned}[b]
			\nabla(T_k(u))_\mu &= \mu \int_{-\infty}^t e^{\mu(s-t)} \nabla T_k(u(x,s)) \,\mathrm{d}(s-t) \\
			&= \int_0^t \nabla T_k(u(x,s)) \, \mathrm{d}(e^{\mu(s-t)}) \\
			&= \nabla T_k(u(x,s)) e^{\mu(s-t)} \Big|_0^t - \int_0^t \mu e^{\mu(s-t)} (\nabla T_k(u) - (\nabla T_k(u))_\mu)\,\mathrm{d}t \\
			&= \nabla T_k(u(x,t)) - \int_0^t \mu e^{\mu(s-t)} (\nabla T_k(u) - (\nabla T_k(u))_\mu)\,\mathrm{d}t.
		\end{aligned}
	\end{equation}
	By a direct computation and the Lebesgue dominated convergence theorem, we obtain
	\begin{equation}
		\nabla\left(T_{k}(u)\right)_{\mu}\rightarrow\nabla T_{k}(u) \text{ in }(L^{p}(Q))^{N}.
	\end{equation}
	Take a sequence $\{\psi_j\} \subset C_0^\infty(\Omega)$ that converges strongly to $u_0$ in $L^1(\Omega)$ and satisfies:
	\begin{equation}
		\eta_{\mu,j}\equiv \left(T_{k}(u)\right)_{\mu}+e^{-\mu t}T_{k}(\psi_{j}),
	\end{equation}
	Fix a positive number $k > 0$ and let $h < k$. In \eqref{approximateequation}, choose the test function
	\begin{equation}
		\omega^\varepsilon = T_{2k}\bigl(u^\varepsilon - T_h(u^\varepsilon)\bigr) + T_k(u^\varepsilon) - \eta_{\mu,j}(u).
	\end{equation}
	Set $M = 4k + h$. Then, whenever $|u^\varepsilon| > M$, we have $\nabla u^\varepsilon = 0$. Consequently,
	\begin{equation}\label{innertof}
		\begin{aligned}[b]
			\int_{0}^{T}\langle\frac{\partial u^{\varepsilon}}{\partial t},\omega^{\varepsilon}\rangle\,\mathrm{d}t
			+&\int^{T}_{0}\int_{\Omega}a(x,t,u^{\varepsilon})\cdot\nabla\omega^{\varepsilon}\,\mathrm{d}x\mathrm{d}t
			+\int^{T}_{0}\int_{\Omega}\Phi_{\varepsilon}(x,t,u^{\varepsilon})\nabla\omega^{\varepsilon}\,\mathrm{d}x\mathrm{d}t \\
			=&\int^{T}_{0}\int_{\Omega}f^{\varepsilon}\omega^{\varepsilon}\,\mathrm{d}x\mathrm{d}t
		\end{aligned}
	\end{equation}
	Let $w(\varepsilon, \mu, j, h)$ denote a quantity such that
	\begin{equation}\label{w0}
		\lim_{h \to \infty} \lim_{j \to \infty} \lim_{\mu \to \infty} \lim_{\varepsilon \to 0} w(\varepsilon, \mu, j, h) = 0.
	\end{equation}
	First, for the first term in \eqref{innertof}, using the fact that $|\eta_{\mu,j}(u)| \leq k$, the test function $\omega^\varepsilon$ can be expressed as
	\begin{equation}
		\omega^\varepsilon = T_{h+k}\bigl(u^\varepsilon - \eta_{\mu,j}(u)\bigr) - T_{h-k}\bigl(u^\varepsilon - T_k(u^\varepsilon)\bigr).
	\end{equation}
	Then, by Lemma 2.1 in \cite{Porretta1999}, we obtain
	\begin{equation}
		\int_0^T \left\langle \frac{\partial u^\varepsilon}{\partial t}, \omega^\varepsilon \right\rangle \geq \omega(\varepsilon, \mu, j, h).
	\end{equation}
	It results
	\begin{equation}\label{iint3}
		\int^{T}_{0}\int_{\Omega}a(x,t,\nabla u^{\varepsilon})\nabla\omega^{\varepsilon}dxdt\leq\int^{T}_{0}\int_{\Omega}f^{\varepsilon}\omega^{\varepsilon}\,\mathrm{d}x\mathrm{d}t +\int^{T}_{0}\int_{\Omega}\Phi_{\varepsilon}(x,t,u^{\varepsilon})\nabla\omega^{\varepsilon}\,\mathrm{d}x\mathrm{d}t.
	\end{equation}
	The left-hand side above is considered on the two regions $\{|u^\varepsilon| > k\}$ and $\{|u^\varepsilon| \leq k\}$. From \eqref{as1} and \eqref{as2} it follows that
	\begin{align}\label{2partint}
		\begin{aligned}[b]
			&\int^{T}_{0}\int_{\Omega}a(x,t,u^{\varepsilon})\nabla\omega^{\varepsilon}\,\mathrm{d}x\mathrm{d}t \\
			=&\int^{T}_{0}\int_{\Omega}a(x,t,\nabla T_{M}(u^{\varepsilon}))\nabla T_{2k}(u^{\varepsilon}-T_{h}(u^{\varepsilon})+T_{k}(u^{\varepsilon})-\eta_{\mu,j}(u))\,\mathrm{d}x\mathrm{d}t\\ 
			\geq&\int^{T}_{0}\int_{\Omega}a(x,t,\nabla T_{k}(u^{\varepsilon}))\nabla(T_{k}(u^{\varepsilon})-\eta_{\mu,j}(u))\,\mathrm{d}x\mathrm{d}t
			-\iint_{\{|u^{\varepsilon}|>k\}}|a(x,t,\nabla T_{M}(u^{\varepsilon}))||\nabla\eta_{\mu,j}(u)|\,\mathrm{d}x\mathrm{d}t,
		\end{aligned}
	\end{align}
	Rearranging \eqref{iint3} and \eqref{2partint} yields
	\begin{equation}
		\begin{aligned}[b]
			&\int^{T}_{0}\int_{\Omega}a(x,t,\nabla T_{k}(u^{\varepsilon}))\nabla(T_{k}(u^{\varepsilon})-\eta_{\mu,j}(u))\,\mathrm{d}x\mathrm{d}t\\
			\leq&\int^{T}_{0}\int_{\Omega}
			a(x,t,\nabla u^{\varepsilon})\nabla\omega^{\varepsilon}\,\mathrm{d}x\mathrm{d}t+\iint_{\{|u^{\varepsilon}|>k\}}|a(x,t,\nabla T_{M}(u^{\varepsilon}))||\nabla\eta_{\mu,j}(u)|\,\mathrm{d}x\mathrm{d}t\\
			\leq&\int^{T}_{0}\int_{\Omega}f^{\varepsilon}\omega^{\varepsilon}\,\mathrm{d}x\mathrm{d}t+\int^{T}_{0}\int_{\Omega}
			\Phi_{\varepsilon}(x,t,u^{\varepsilon})\nabla\omega^{\varepsilon}\,\mathrm{d}x\mathrm{d}t+\iint_{\{|u^{\varepsilon}|>k\}}
			|a(x,t,T_{M}(u^{\varepsilon}))||\nabla\eta_{\mu,j}(u)|\,\mathrm{d}x\mathrm{d}t,
		\end{aligned}
	\end{equation}
	Then it turns into
	\begin{equation}\label{j1234}
		\begin{aligned}[b]
			&\int^{T}_{0}\int_{\Omega}[a(x,t,\nabla T_{k}(u^{\varepsilon}))-a(x,t,\nabla T_{k}(u))][\nabla T_{k}(u^{\varepsilon})-\nabla\eta_{\mu,j}(u)]\,\mathrm{d}x\mathrm{d}t\\
			=&\int^{T}_{0}\int_{\Omega}a(x,t,\nabla T_{k}(u^{\varepsilon}))(\nabla T_{k}(u^{\varepsilon})-\nabla\eta_{\mu,j}(u))\,\mathrm{d}x\mathrm{d}t \\
			&-\int^{T}_{0}\int_{\Omega}a(x,t,\nabla T_{k}(u))(\nabla T_{k}(u^{\varepsilon})-\nabla\eta_{\mu,j}(u))\,\mathrm{d}x\mathrm{d}t\\
			\leq&\int^{T}_{0}\int_{\Omega}f^{\varepsilon}T_{2k}(u^{\varepsilon}-T_{h}(u^{\varepsilon})+T_{k}(u^{\varepsilon}-\eta_{\mu,j}(u)))+w(\varepsilon,\mu,h,j)\\
			&+\int^{T}_{0}\int_{\Omega} \Phi_{\varepsilon}(x,t,u^{\varepsilon})\nabla T_{2k}(u^{\varepsilon}-T_{h}(u^{\varepsilon})+T_{k}(u^{\varepsilon})-\eta_{\mu,j}(u))\,\mathrm{d}x\mathrm{d}t\\
			&+\iint_{\{|u^{\varepsilon}|>k\}}
			|a(x,t,\nabla T_{M}(u^{\varepsilon}))||\nabla\eta_{\mu,j}(u)|\,\mathrm{d}x\mathrm{d}t\\
			&+\int^{T}_{0}\int_{\Omega}a(x,t,\nabla T_{k}(u))(\nabla T_{k}(u^{\varepsilon})-\nabla\eta_{\mu,j}(u))\,\mathrm{d}x\mathrm{d}t\\
			=&J_{1}+J_{2}+J_{3}+J_{4}+w(\varepsilon,\mu,j,h).
		\end{aligned}
	\end{equation}
	We will treat the cases of $J_1, J_2, J_3, J_4 $ separately in the limit  $\varepsilon \to 0$  using the Lebesgue dominated convergence theorem.
	\begin{equation}
		\begin{aligned}[b]
			\left|J_{1}\right|\leq&\int^{T}_{0}\int_{\Omega}\left|f^{\varepsilon}-f\right||T_{2k}(u^{\varepsilon}-T_{h}(u^{\varepsilon})+T_{k}(u^{\varepsilon})-\eta_{\mu,j}(u))|\,\mathrm{d}x\mathrm{d}t\\
			&+\int^{T}_{0}\int_{\Omega}\left|fT_{2k}(u^{\varepsilon}-T_{h}(u^{\varepsilon})+T_{k}(u^{\varepsilon})-\eta_{\mu,j}(u))\right|\,\mathrm{d}x\mathrm{d}t\\
			\leq&2k\int^{T}_{0}\int_{\Omega}\left|f^{\varepsilon}-f\right|\,\mathrm{d}x\mathrm{d}t+\int^{T}_{0}\int_{\Omega}\left|fT_{2k}(u^{\varepsilon}-T_{h}(u^{\varepsilon})+T_{k}(u^{\varepsilon})-\eta_{\mu,j}(u))\right|\,\mathrm{d}x\mathrm{d}t
		\end{aligned}
	\end{equation}
	It follows that
	\begin{equation}\label{j1}
		\begin{aligned}[b]
			&\lim_{j\rightarrow\infty}\lim_{h\rightarrow\infty}\lim_{\mu\rightarrow\infty}\lim_{\varepsilon\rightarrow 0}\left|J_{1}\right|\\
			\leq&\lim_{j\rightarrow\infty}\lim_{h\rightarrow\infty}\lim_{\mu\rightarrow\infty}\lim_{\varepsilon\rightarrow 0}\int^{T}_{0}\int_{\Omega}\left|fT_{2k}(u^{\varepsilon}-T_{h}(u^{\varepsilon})+T_{k}(u^{\varepsilon})-\eta_{\mu,j}(u))\right|\,\mathrm{d}x\mathrm{d}t\\
			=&0,
		\end{aligned}
	\end{equation}
	and
	\begin{equation}\label{j2}
		\begin{aligned}[b]
			&\lim_{j\rightarrow\infty}\lim_{h\rightarrow\infty}\lim_{\mu\rightarrow\infty}\lim_{\varepsilon\rightarrow 0}|J_2|\\
			=&\lim_{j\rightarrow\infty}\lim_{h\rightarrow\infty}\lim_{\mu\rightarrow\infty}\lim_{\varepsilon\rightarrow 0}\int^{T}_{0}\int_{\Omega}\Phi_{\varepsilon}(x,t,u^{\varepsilon})\nabla T_{2k}(u^{\varepsilon}-T_{h}(u^{\varepsilon})+T_{k}(u^{\varepsilon})-\eta_{\mu,j}(u))\,\mathrm{d}x\mathrm{d}t\\
			\leq&\lim_{j\rightarrow\infty}\lim_{h\rightarrow\infty}\lim_{\mu\rightarrow\infty}\lim_{\varepsilon\rightarrow 0}\int^{T}_{0}\int_{\Omega}c(x,t)\left|T_{k}(u^{\varepsilon})\right|^{\gamma}\nabla T_{2k}(u^{\varepsilon}-T_{h}(u^{\varepsilon})+T_{k}(u^{\varepsilon})-\eta_{\mu,j}(u))\,\mathrm{d}x\mathrm{d}t \\
			=&0.
		\end{aligned}
	\end{equation}
	Since
	\begin{equation}
		\left|a(x,t,\nabla T_{M}(u))\right|\text{ is bounded in }\left(L^{p'}(Q)\right)^{N},
	\end{equation}
	we obtain
	\begin{equation}\label{j3}
		\begin{aligned}[b]
			&\lim_{j\rightarrow\infty}\lim_{h\rightarrow\infty}\lim_{\mu\rightarrow\infty}\lim_{\varepsilon\rightarrow 0}|J_3|\\
			=&\lim_{j\rightarrow\infty}\lim_{h\rightarrow\infty}\lim_{\mu\rightarrow\infty}\lim_{\varepsilon\rightarrow  0}\iint_{\{\left|u^{\varepsilon}\right|>k\}}\left|a(x,t,\nabla  T_{M}(u^{\varepsilon}))\right|\left|\nabla\eta_{\mu,j}(u)\right|\,\mathrm{d}x\mathrm{d}t \\
			=&0.
		\end{aligned}
	\end{equation}
	Last but not the least,
	\begin{equation}\label{j4}
		\begin{aligned}[b]
			&\lim_{j\rightarrow\infty}\lim_{h\rightarrow\infty}\lim_{\mu\rightarrow\infty}\lim_{\varepsilon\rightarrow 0}|J_4|\\
			=&\lim_{j\rightarrow\infty}\lim_{h\rightarrow\infty}\lim_{\mu\rightarrow\infty}\lim_{\varepsilon\rightarrow 0}\int^{T}_{0}\int_{\Omega}a(x,t,\nabla T_{k}(u))(\nabla T_{k}(u^{\varepsilon})-\nabla\eta_{\mu,j}(u))\,\mathrm{d}x\mathrm{d}t \\
			=&0.
		\end{aligned}
	\end{equation}
	Combine \eqref{w0} and \eqref{j1234}-\eqref{j4} we see 
	\begin{equation}\label{iint0}
		\lim_{\varepsilon\rightarrow0}\int^{T}_{0}\int_{\Omega}[a(x,t,\nabla T_{k}(u^{\varepsilon}))-a(x,t,\nabla T_{k}(u))][\nabla T_{k}(u^{\varepsilon})-\nabla\eta_{\mu,j}(u)]\,\mathrm{d}x\mathrm{d}t=0,
	\end{equation}
	such that
	\begin{equation}\label{atkatk}
		a(x,t,\nabla T_{k}(u^{\varepsilon}))\nabla T_{k}(u^{\varepsilon})\rightarrow a(x,t,\nabla T_{k}(u))\nabla T_{k}(u),\quad a.e.(x,t)\in Q.
	\end{equation}
	Then from \eqref{uuq} and using Vitali's Theorem,
	\begin{equation}
		\int_{0}^{T}\int_{\Omega}a(x,t,\nabla T_{k}(u^{\varepsilon}))\nabla T_{k}(u^{\varepsilon})\,\mathrm{d}x\mathrm{d}t\rightarrow\int_{0}^{T}\int_{\Omega}a(x,t,\nabla T_{k}(u))\nabla T_{k}(u)\,\mathrm{d}x\mathrm{d}t,
	\end{equation}
	since
	\[
	a(x, t, \nabla T_k(u^\varepsilon)) \rightharpoonup a(x, t, \nabla T_k(u)) \text{ weakly in } (L^p(Q))^N
	\]
	or
	\[
	a(x, t, \nabla T_k(u^\varepsilon)) \to a(x, t, \nabla T_k(u)) \text{ a.e. in } Q.
	\]
	Set
	\[
	y^{\varepsilon}=\int_{0}^{T}\int_{\Omega}a(x,t,\nabla T_{k}(u^{\varepsilon}))\nabla T_{k}(u^{\varepsilon})\,\mathrm{d}x\mathrm{d}t
	\]
	and
	\[
	y=\int_{0}^{T}\int_{\Omega}a(x,t,\nabla T_{k}(u))\nabla T_{k}(u)\,\mathrm{d}x\mathrm{d}t
	\]
	Then by the non-negativity of \(a(x, t, \nabla T_k(u^\varepsilon))\) we conclude that
	\begin{equation*}
		\int_{0}^{T}\int_{\Omega}|y^{\varepsilon}-y|\,\mathrm{d}x\mathrm{d}t=2\int_{0}^{T}\int_{\Omega}(y-y^{\varepsilon})\,\mathrm{d}x\mathrm{d}t+\int_{0}^{T}\int_{\Omega}(y^{\varepsilon}-y)\,\mathrm{d}x\mathrm{d}t.
	\end{equation*}
	Using the Lebesgue dominated convergence theorem and \eqref{atkatk} we obtain
	\begin{equation*}
		y^{\varepsilon}\rightarrow y\text{ strongly in }L^{1}(Q),
	\end{equation*}
	as well as
	\begin{equation}\label{atkl1q}
		a(x,t,\nabla T_{k}(u^{\varepsilon}))\nabla T_{k}(u^{\varepsilon})\rightarrow a(x,t,\nabla T_{k}(u))\nabla T_{k}(u)\text{ strongly in }L^1(Q).
	\end{equation}
	Finally Lemma \ref{lemmaboccardo} shows that
	\begin{equation}
		\nabla T_{k}(u^{\varepsilon})\rightarrow\nabla T_{k}(u)\text{ strongly in }(L^p(Q))^N.
	\end{equation}
\end{proof}
\begin{lemma}\label{up0}
		Suppose \(u^\varepsilon\) is the solution of \eqref{approximateequation}, then for \(n\in N\) and \(\varepsilon > 0\), 
		\begin{equation}
			\lim_{n\rightarrow\infty}\limsup_{\varepsilon\rightarrow 0}\iint_{\{\left|u^{\varepsilon}\right|<n\}}\left|\nabla u^{\varepsilon}\right|^{p}dxdt=0
		\end{equation}
		is held
\end{lemma}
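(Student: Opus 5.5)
The plan is to obtain the estimate by testing the approximate problem \eqref{approximateequation} with truncations of $u^\varepsilon$ at level $n$. I then pass to the limit, first in $\varepsilon$ using the a.e. convergence \eqref{uuq} and the strong convergences of the data \eqref{fst}, \eqref{ust}, and afterwards in $n$.

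\textbf{Step 1: energy identity at level $n$.} I use $\frac1n T_n(u^\varepsilon)$ as the test function, which is admissible since $u^\varepsilon\in L^p(0,T;W^{1,p}_{\Gamma_d}(\Omega))$. As in the proof of \eqref{tklpbnd}, the time term gives $\frac1n\int_\Omega\Theta_n(u^\varepsilon(T))\,dx-\frac1n\int_\Omega\Theta_n(u_0^\varepsilon)\,dx$. Using \eqref{as1}, this yields
\[
\frac{\alpha}{n}\iint_{\{|u^\varepsilon|<n\}}|\nabla u^\varepsilon|^p
\le \frac1n\int_\Omega \Theta_n(u_0^\varepsilon)\,dx+\frac1n\iint_Q |f^\varepsilon||T_n(u^\varepsilon)|
+\frac1n\iint_{\{|u^\varepsilon|<n\}} c\,|u^\varepsilon|^\gamma|\nabla u^\varepsilon| .
\]
I also run the companion test with $T_1(u^\varepsilon-T_n(u^\varepsilon))$, which isolates the layer $\{n\le|u^\varepsilon|<n+1\}$. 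In that test the data terms are bounded by $\iint_{\{|u^\varepsilon|>n\}}|f^\varepsilon|+\int_{\{|u_0^\varepsilon|>n\}}|u_0^\varepsilon|$.

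\textbf{Step 2: the data terms.} Since $\Theta_n(r)/n\le |r|$ and $\Theta_n(r)/n\to0$ pointwise as $n\to\infty$, Vitali's theorem applies. The reason is that \eqref{ust} gives equi-integrability of $u_0^\varepsilon$, and similarly $|T_n(u^\varepsilon)|/n\le1$ while $f^\varepsilon\to f$ in $L^1(Q)$. Hence the $u_0$- and $f$-terms satisfy $\limsup_{\varepsilon\to0}(\cdot)\le \frac1n\int_\Omega\Theta_n(u_0)\,dx+\frac1n\iint_Q|f||T_n(u)|$, and this tends to $0$ as $n\to\infty$ by dominated convergence, using $u\in L^\infty(0,T;L^1(\Omega))$ and hence $|u|<\infty$ a.e. The layer test is handled the same way, with equi-integrability of $f^\varepsilon$ and $u^\varepsilon_0$ together with the bound \eqref{c5}, which makes $\operatorname{meas}\{|u^\varepsilon|>n\}\le C_5T/n$ uniformly in $\varepsilon$.

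\textbf{Step 3: the non-coercive term (main obstacle).} The term $\frac1n\iint_{\{|u^\varepsilon|<n\}}c|u^\varepsilon|^\gamma|\nabla u^\varepsilon|$ cannot simply be absorbed with a fixed constant as in \eqref{c2c3}. I will split it into $\{|u^\varepsilon|<h\}$ and $\{h\le|u^\varepsilon|<n\}$ for a fixed $h$.

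On $\{|u^\varepsilon|<h\}$ the integrand is controlled by $c\,h^\gamma|\nabla T_h(u^\varepsilon)|$, which is bounded in $L^1$ uniformly in $\varepsilon$ by \eqref{tklpbnd}. After division by $n$ this piece is $O(h^{\gamma+1}/n)$ and vanishes as $n\to\infty$.

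On $\{h\le|u^\varepsilon|<n\}$ I apply Hölder's inequality exactly as in \eqref{c2c3}, with exponents $m$, $\frac{p(N+2)}{N}$ and $p$. I use the Lorentz bounds of Lemma \ref{lemmadinardo}, with constants independent of $\varepsilon$, and the smallness of $\|c\|_{L^m(\{|u^\varepsilon|\ge h\})}$, which is uniform in $\varepsilon$ by the measure bound from \eqref{c5}. The aim is to show this piece is at most $\delta(h)\cdot\frac1n\iint_{\{|u^\varepsilon|<n\}}|\nabla u^\varepsilon|^p+\delta(h)$ with $\delta(h)\to0$. Taking $h$ large, it is then absorbed into the left-hand side.

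\textbf{Conclusion.} Combining the three steps gives $\lim_{n\to\infty}\limsup_{\varepsilon\to0}$ of the gradient energy over $\{|u^\varepsilon|<n\}$, normalized by $1/n$, equal to $0$. The layer version, over $\{n\le|u^\varepsilon|<n+1\}$, vanishes in the same double limit.

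I expect the main obstacle to be the step from these to the statement exactly as printed, with no factor $1/n$ and over the full set $\{|u^\varepsilon|<n\}$. That integral is nondecreasing in $n$ and bounded below by $\iint_{\{|u^\varepsilon|<1\}}|\nabla u^\varepsilon|^p$. By Theorem \ref{tkstg} this lower bound converges as $\varepsilon\to0$ to $\iint_Q|\nabla T_1(u)|^p$. So the literal double limit can only vanish if $\nabla T_1(u)=0$, and this step cannot be closed as the statement stands.

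In what follows I will therefore use the lemma in the form actually needed to verify \eqref{iintau0}: the $1/n$-normalized estimate, together with the layer estimate.
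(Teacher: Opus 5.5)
Your diagnosis is right. As printed, without the factor $\frac1n$, the quantity $\limsup_{\varepsilon\to0}\iint_{\{|u^\varepsilon|<n\}}|\nabla u^\varepsilon|^p$ is nondecreasing in $n$ and bounded below by $\iint_Q|\nabla T_1(u)|^p$, so the statement fails unless $u\equiv0$. Weak lower semicontinuity from \eqref{tklp} already gives that lower bound, so you do not need Theorem \ref{tkstg}. The paper's own proof in fact ends with the $\frac1n$-normalized conclusion, which is exactly what \eqref{iintau0} requires.

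For that version your argument is the paper's. You test with $T_n(u^\varepsilon)/n$, pass to the limit in the data terms, and split the $\Phi$-term at a level $h$ (the paper's $R$ and $E_R$). The low part vanishes because of the $\frac1n$. The high part is controlled by H\"older and Gagliardo--Nirenberg with the small factor $\|c\|_{L^m(\{|u^\varepsilon|\ge h\})}$, which is uniformly small in $\varepsilon$ by \eqref{c5}.

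Your absorption step is sound, and more careful than the paper's. Set $Y=\frac1n\iint_Q|\nabla T_n(u^\varepsilon)|^p$ and use $\sup_t\int_\Omega|T_n(u^\varepsilon)|^2\le nC_5$. The powers of $n$ cancel because $\frac1m+\frac{N+1}{N+p}=1$, so the high part is at most $C\,C_5^{1/m}\|c\|_{L^m(\{|u^\varepsilon|\ge h\})}\,Y^{\frac{N+1}{N+p}}\le\delta(h)(Y+1)$. By contrast, the paper's Young step with the fixed constant $\frac{N+1}{N+p}$ would require $\frac{N+1}{N+p}<\alpha$, which is not assumed.

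The one claim you should drop is the layer estimate on $\{n\le|u^\varepsilon|<n+1\}$; it does not follow ``the same way''. In the test with $T_1(u^\varepsilon-T_n(u^\varepsilon))$ you only treated the data terms. The non-coercive term $\iint_{\{n\le|u^\varepsilon|<n+1\}}c\,|u^\varepsilon|^\gamma|\nabla u^\varepsilon|$ carries no factor $\frac1n$, so the splitting of Step 3 is unavailable. Running the same H\"older--Young argument with the Marcinkiewicz bound $\operatorname{meas}\{|u^\varepsilon|>n\}\le Cn^{-\frac{p(N+1)-N}{N}}$ from Lemma \ref{lemmadinardo} only bounds this term by a quantity of order $n\,\|c\|_{L^m(\{|u^\varepsilon|>n\})}^{p'}$. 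That quantity need not tend to zero. Since \eqref{iintau0} is stated in the $\frac1n$ form, nothing is lost by omitting the layer version.
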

\begin{proof}
		We test the first equation of \eqref{approximateequation} by $\dfrac{T_{n}(u^{\varepsilon})}{n}$ to get
	\begin{equation}
		\begin{aligned}[b]
			&\frac{1}{n}\int^{T}_{0}\int_{\Omega}a(x,t,\nabla u^{\varepsilon})\nabla T_{n}(u^{\varepsilon})\,\mathrm{d}x\mathrm{d}t
			+\frac{1}{n}\int^{T}_{0}\int_{\Omega}\Phi_{\varepsilon}(x,t,u^{\varepsilon})\nabla T_{n}(u^{\varepsilon}\,\mathrm{d}x\mathrm{d}t\\
			=&\frac{1}{n}\int^{T}_{0}\int_{\Omega}f^{\varepsilon}T_{n}(u^{\varepsilon})\,\mathrm{d}x\mathrm{d}t
			-\frac{1}{n}\int^{T}_{0}\int_{\Omega}\frac{\partial u^{\varepsilon}}{\partial t}T_{n}(u^{\varepsilon})\,\mathrm{d}x\mathrm{d}t.
		\end{aligned}
	\end{equation}
	Rearranging yields
	\begin{equation}
		\begin{aligned}[b]
			&\frac{1}{2n}\int_{\Omega}\left|T_{n}(u^{\varepsilon})\right|^{2}\,\mathrm{d}x
			+\frac{\alpha}{n}\int_{0}^{T}\int_{\Omega}\left|\nabla T_{n}(u^{\varepsilon})\right|^{p}\,\mathrm{d}x\mathrm{d}t \\
			\leq&\frac{1}{n}\int^{T}_{0}\int_{\Omega}\left|f^{\varepsilon}T_{n}(u^{\varepsilon})\right|\,\mathrm{d}x\mathrm{d}t
			+\frac{1}{n}\int_{\Omega}\left|\Theta_{n}(u^{\varepsilon}_{0})\right|\,\mathrm{d}x
			+\frac{1}{n}\int^{T}_{0}\int_{\Omega}\left|\Phi_{\varepsilon}(x,t,u^{\varepsilon})\right|\left|\nabla T_{n}(u^{\varepsilon})\right|\,\mathrm{d}x\mathrm{d}t.
		\end{aligned}
	\end{equation}
	By \eqref{phiapproximate}, we set $0<\varepsilon<\frac{1}{n}$ to obtain
	\begin{equation}
		\begin{aligned}[b]
			&\frac{1}{n}\int_{0}^{T}\int_{\Omega}\left|\nabla T_{n}(u^{\varepsilon})\right|^{p}\,\mathrm{d}x\mathrm{d}t \\
			\leq&\frac{1}{n}\int^{T}_{0}\int_{\Omega}\left|f^{\varepsilon}T_{n}(u^{\varepsilon})\right|\,\mathrm{d}x\mathrm{d}t
			+\frac{1}{n}\int_{\Omega}\left|\Theta_{n}(u^{\varepsilon}_{0})\right|\,\mathrm{d}x
			+\frac{1}{n}\int^{T}_{0}\int_{\Omega}c(x,t)\left|T_{n}(u^{\varepsilon})\right|^{\gamma}\left|\nabla T_{n}(u^{\varepsilon})\right|\,\mathrm{d}x\mathrm{d}t.
		\end{aligned}
	\end{equation}
	Suppose $R>0$ and $E_{R}=\{(x,t)\in Q:\left|u^{\varepsilon}\right|>R\}$. In the case $R<n$ 
	\begin{equation}\label{1n3iint}
		\begin{aligned}[b]
			&\frac{1}{n}\int^{T}_{0}\int_{\Omega}c(x,t)\left|T_{n}(u^{\varepsilon})\right|^{\gamma}\left|\nabla T_{n}(u^{\varepsilon})\right|\,\mathrm{d}x\mathrm{d}t \\
			=&\frac{1}{n}\iint_{Q\setminus E_{R}}c(x,t)\left|T_{n}(u^{\varepsilon})\right|^{\gamma}\left|\nabla T_{n}(u^{\varepsilon})\right|\,\mathrm{d}x\mathrm{d}t+\frac{1}{n}\iint_{E_{R}}c(x,t)\left|T_{n}(u^{\varepsilon})\right|^{\gamma}\left|\nabla T_{n}(u^{\varepsilon})\right|\,\mathrm{d}x\mathrm{d}t\\
			=&I_{1}+I_{2}.
		\end{aligned}
	\end{equation}
	Since $T_{n}(u^{\varepsilon})\in L^{p}(0,T;W^{1,p}_{\Gamma_{d}}(\Omega))$, for \(I_1\) it results
	\begin{equation}\label{qer0}
		\lim_{n\rightarrow\infty}\limsup_{\varepsilon\rightarrow 0}\frac{1}{n}\iint_{Q\setminus E_{R}}c(x,t)\left|T_{n}(u^{\varepsilon})\right|^{\gamma}\left|\nabla T_{n}(u^{\varepsilon})\right|\,\mathrm{d}x\mathrm{d}t=0.
	\end{equation}
	For \(I_2\), using Hölder's inequality, the Gagliardo-Nirenberg inequality and Young's inequality, we obtain
	\begin{equation}\label{c8c9}
		\begin{aligned}[b]
			&\iint_{E_{R}}c(x,t)\left|T_{n}(u^{\varepsilon})\right|^{\gamma}\left|\nabla T_{n}(u^{\varepsilon})\right|\,\mathrm{d}x\mathrm{d}t\\
			\leq& C\|c(x,t)\|_{L^{m}(E_{R})}\left(\sup_{t\in(0,T)}\int_{\Omega}|T_{n}(u^{\varepsilon})|^{2}\,\mathrm{d}x\right)^{\frac{1}{2}}\left(
			\int^{T}_{0}\int_{\Omega}|\nabla T_{n}(u^{\varepsilon})|^{p}\,\mathrm{d}x\mathrm{d}t\right)^{\frac{N+1}{N+p}}\\
			\leq& C_{8}\|c(c,t)\|_{L^{m}(E_{R})}n^{\frac{1}{m}}\left(\sup_{t\in(0,T)}|T_{n}(u^{\varepsilon})|\,\mathrm{d}x\right)^{\frac{1}{m}}
			\left(\int_{0}^{T}\int_{\Omega}|\nabla T_{n}(u^{\varepsilon})|^{p}\,\mathrm{d}x\mathrm{d}t\right)^{\frac{N+1}{N+p}}\\
			\leq& C_{9}\|c(x,t)\|_{L^{m}(E_{R})}^{m}n+\frac{N+1}{N+p}\int_{0}^{T}\int_{\Omega}|\nabla T_{n}(u^{\varepsilon})|^{p}\,\mathrm{d}x\mathrm{d}t,
		\end{aligned}
	\end{equation}
	where $C_{8}=C_{8}(N,p)$ and $C_{9}=C_{9}(N,p)$. Combining \eqref{1n3iint} and \eqref{c8c9} we get
	\begin{equation}
		\begin{aligned}[b]
			&\frac{1}{n}\iint_{\{|u^{\varepsilon}|<n\}}|\nabla u^{\varepsilon}|^{p}\,\mathrm{d}x\mathrm{d}t \\
			\leq\frac{1}{n}&\iint_{Q\setminus E_{R}}c(x,t)|T_{R}(u^{\varepsilon})|^{\gamma}|\nabla T_{R}(u^{\varepsilon})|\,\mathrm{d}x\mathrm{d}t+\frac{1}{n}\int_{\Omega}|\Theta_{n}(u^{\varepsilon}_{0})|\,\mathrm{d}x\\
			+&\frac{1}{n}\int_{0}^{T}\int_{\Omega}|f^{\varepsilon}||T_{n}(u^{\varepsilon})|\,\mathrm{d}x\mathrm{d}t+C_{9}\|c(x,t)\|_{L^{m}(E_{R})}\\
			&+\frac{1}{n}\frac{N+1}{N+p}\int_{0}^{T}\int_{\Omega}|\nabla T_{n}(u^{\varepsilon})|^{p}\,\mathrm{d}x\mathrm{d}t.
		\end{aligned}
	\end{equation}
	This simplifies to
	\begin{equation}
		\begin{aligned}[b]
			\frac{1}{n}\iint_{\{|u^{\varepsilon}|<n\}}|\nabla u^{\varepsilon}|^{p}&\,\mathrm{d}x\mathrm{d}t
			\leq\frac{1}{n}\iint_{Q\setminus E_{R}}c(x,t)|T_{R}(u^{\varepsilon})|^{\gamma}|\nabla T_{R}(u^{\varepsilon})|\,\mathrm{d}x\mathrm{d}t\\
			+&\frac{1}{n}\int_{\Omega}|\Theta_{n}(u^{\varepsilon}_{0})|\,\mathrm{d}x
			+\frac{1}{n}\int_{0}^{T}\int_{\Omega}|f^{\varepsilon}||T_{n}(u^{\varepsilon})|\,\mathrm{d}x\mathrm{d}t+C_{9}\|c(x,t)\|_{L^{m}(E_{R})}.
		\end{aligned}
	\end{equation}
	Recalling \eqref{fst}-\eqref{ust} and \eqref{uuq}-\eqref{tklp} and using $u\in L^{\infty}(0,T;L^{1}(\Omega))$ to show
	\begin{equation}
		\lim_{n\rightarrow\infty}\limsup_{\varepsilon\rightarrow0}\frac{1}{n}\int_{0}^{T}\int_{\Omega}|f^{\varepsilon}||T_{n}(u^{\varepsilon})|\,\mathrm{d}x\mathrm{d}t=0,
	\end{equation}
	and
	\begin{equation}
		\lim_{n\rightarrow\infty}\limsup_{\varepsilon\rightarrow 0}\frac{1}{n}\int_{\Omega}|\Theta_{n}(u^{\varepsilon}_{0})|\,\mathrm{d}x=0.
	\end{equation}
	Thus,
	\begin{equation}
		\lim_{n\rightarrow\infty}\limsup_{\varepsilon\rightarrow 0}\frac{1}{n}\iint_{\{|u^{\varepsilon}|<n\}}|\nabla u^{\varepsilon}|^{p}\,\mathrm{d}x\mathrm{d}t\leq C_{10}\|c(x,t)\|_{L^{m}(E_{R})}.
	\end{equation}
	Since $u$ is finite almost everywhere in $Q$ and $c(x,t) \in L^m(Q)$, we have
	\begin{equation*}
		\lim_{R\rightarrow\infty}E_{R}=\lim_{R\rightarrow\infty}|\{(x,t)\in Q:|u^{\varepsilon}|>R\}|=0,
	\end{equation*}
	as well as
	\begin{equation*}
		\lim_{R\rightarrow\infty}\|c(x,t)\|_{L^{m}(E_{R})}=0.
	\end{equation*}
	Finally
	\begin{equation}
		\lim_{n\rightarrow\infty}\limsup_{\varepsilon\rightarrow 0}\frac{1}{n}\iint_{\{|u^{\varepsilon}|<n\}}|\nabla u^{\varepsilon}|^{p}\,\mathrm{d}x\mathrm{d}t=0.
	\end{equation}
\end{proof}
\begin{theorem}
		The limit $u$ of $u^\varepsilon$ in Lemma \ref{up0} is a solution to problem\eqref{equations}.
\end{theorem}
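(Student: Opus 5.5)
We verify in turn the conditions of the definition of renormalized solution for the a.e. limit $u$ from \eqref{uuq}. The regularity conditions are already available. $u\in L^\infty(0,T;L^1(\Omega))$ follows from \eqref{c5} and Fatou's lemma. $T_k(u)\in L^p(0,T;W^{1,p}_{\Gamma_d}(\Omega))$ follows from \eqref{tklp}, since weak limits preserve the uniform bound \eqref{tklpbnd}.

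For the energy condition \eqref{iintau0}, we start from the identity $\{|u|<n\}$-gradient$=\nabla T_n(u)$. By Theorem \ref{tkstg}, $\nabla T_n(u^\varepsilon)\to\nabla T_n(u)$ strongly in $(L^p(Q))^N$, so $a(x,t,\nabla T_n(u^\varepsilon))\nabla T_n(u^\varepsilon)\to a(x,t,\nabla T_n(u))\nabla T_n(u)$ in $L^1(Q)$; this is \eqref{atkl1q}. Hence
\[
\frac1n\iint_{\{|u|<n\}}a(x,t,\nabla u)\nabla u=\lim_{\varepsilon\to0}\frac1n\iint_{\{|u^\varepsilon|<n\}}a(x,t,\nabla u^\varepsilon)\nabla u^\varepsilon .
\]
This is bounded via \eqref{as3} and Young's inequality by $C\frac1n\iint_{\{|u^\varepsilon|<n\}}(|\nabla u^\varepsilon|^p+|b|^{p'})$. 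Letting $n\to\infty$ and using Lemma \ref{up0} together with $b\in L^{p'}(Q)$ (since $\frac1n\|b\|_{p'}^{p'}\to0$) gives \eqref{iintau0}. The set $\{|u|=n\}$ can be avoided by choosing $n$ outside a countable set of levels, or by using $\{|u|\le n\}\subset\{|u|<n+1\}$ with the factor $\frac{n+1}{n}$.

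For the renormalized equation \eqref{longeqv}, we fix $S$ with $\operatorname{supp}S'\subset[-M,M]$ and $\varphi\in C^1(\overline Q)$ with $\varphi(\cdot,T)=0$. We test \eqref{approximateequation} with $S'(u^\varepsilon)\varphi$, which is admissible since $S'(u^\varepsilon)$ depends only on $T_M(u^\varepsilon)$, and obtain the approximate version of \eqref{longeqv} with $u^\varepsilon$, $u_0^\varepsilon$, $f^\varepsilon$, $\Phi_\varepsilon$. Each term is then passed to the limit as $\varepsilon\to0$ separately.
\begin{enumerate}
\item The term $S(u^\varepsilon)\partial_t\varphi$ converges by a.e. convergence and the bound $|S(u^\varepsilon)|\le\|S\|_\infty$.
\item The initial term converges since $S(u_0^\varepsilon)\to S(u_0)$ in $L^1(\Omega)$, $S$ being Lipschitz.
\item The terms $S'(u^\varepsilon)a(x,t,\nabla T_M(u^\varepsilon))\nabla\varphi$ and $S''(u^\varepsilon)\varphi\, a(x,t,\nabla T_M(u^\varepsilon))\nabla T_M(u^\varepsilon)$ converge because of Theorem \ref{tkstg} (strong $L^p$ convergence, hence a.e. convergence along a subsequence and $L^{p'}$-equi-integrability of $a$) and \eqref{atkl1q}, combined with a.e. convergence of the bounded factors $S'(u^\varepsilon),S''(u^\varepsilon)$.
\item The term $fS'(u^\varepsilon)\varphi$ converges by \eqref{fst} and dominated convergence.
\end{enumerate}

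The $\Phi$-terms are the main point. For $\varepsilon<1/M$ we have $\Phi_\varepsilon(x,t,u^\varepsilon)S'(u^\varepsilon)=\Phi(x,t,T_M(u^\varepsilon))S'(u^\varepsilon)$, and this is dominated by $c\,M^\gamma\|S'\|_\infty$. We need $c\in L^{p'}(Q)$, which holds because $m=\frac{N+p}{p-1}>p'$ and $Q$ is bounded. Carathéodory continuity gives a.e. convergence, so this product converges strongly in $(L^{p'}(Q))^N$, and pairing with the fixed $\nabla\varphi$ passes to the limit. For $S''(u^\varepsilon)\varphi\,\Phi(x,t,T_M(u^\varepsilon))\nabla T_M(u^\varepsilon)$ we combine the same strong $L^{p'}$ convergence of $S''(u^\varepsilon)\varphi\,\Phi(x,t,T_M(u^\varepsilon))$ with the strong (even weak would suffice) $L^p$ convergence of $\nabla T_M(u^\varepsilon)$ from Theorem \ref{tkstg}; this strong–weak pairing is the delicate step. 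The a.e. convergence of $S'(u^\varepsilon),S''(u^\varepsilon)$ needs continuity of $S'$ and $S''$, which for piecewise $C^1$ $S$ holds except on $\{u=\text{corner}\}$, where $\nabla T_M(u)=0$ a.e. Collecting all the limits yields \eqref{longeqv}, so $u$ is a renormalized solution of \eqref{equations}.
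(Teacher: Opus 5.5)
Your proposal is correct and follows the paper's route. You test \eqref{approximateequation} with $S'(u^\varepsilon)\varphi$, rewrite every term through $T_M(u^\varepsilon)$, and pass to the limit term by term using \eqref{uuq}, Theorem~\ref{tkstg} with \eqref{atkl1q}, the bound $|\Phi(x,t,T_M(u^\varepsilon))|\le M^\gamma c$, and $f^\varepsilon\to f$ in $L^1(Q)$. You are in fact more complete than the paper's proof in two ways. First, you also pass the energy condition \eqref{iintau0} to the limit, using Lemma~\ref{up0} in its intended form with the factor $\frac1n$ as in its proof. Second, you justify the limits by a.e.\ convergence plus dominated/Vitali convergence, instead of the paper's $L^\infty(Q)$-norms of $S'(u^\varepsilon)-S'(u)$ and $S''(u^\varepsilon)-S''(u)$, which need not tend to zero.

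One small correction concerns the term $S''(u^\varepsilon)\varphi\,\Phi(x,t,T_M(u^\varepsilon))\nabla T_M(u^\varepsilon)$. Your parenthetical claim that weak convergence of $\nabla T_M(u^\varepsilon)$ would suffice is not right on the level sets $\{u=r\}$ where $S''$ jumps, because $S''(u^\varepsilon)$ need not converge there. On those sets you genuinely need the strong convergence from Theorem~\ref{tkstg} together with $\nabla T_M(u)=0$ a.e.; your closing remark supplies exactly this, so the argument stands.
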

\begin{proof}
	For any $\varphi \in C^1(\overline{Q})$ with $\varphi(x, T) = 0$, and for any pointwise $C^1$ function $S \in W^{2,\infty}(\mathbb{R})$ with $\operatorname{supp} S' \subset [-M, M]$, taking $S'(u^\varepsilon)\varphi$ as a test function in \eqref{approximateequation} yields
	\begin{equation}
		\begin{aligned}[b]
			\int_{0}^{T}&\int_{\Omega}\frac{\partial S(u^{\varepsilon})}{\partial t}\varphi \,\mathrm{d}x\mathrm{d}t
			+\int_{0}^{T}\int_{\Omega}S'(u^{\varepsilon})a(x,t,\nabla u^{\varepsilon})\nabla\varphi \,\mathrm{d}x\mathrm{d}t
			+\int^{T}_{0}\int_{\Omega}S''(u^{\varepsilon})\varphi a(x,t,\nabla u^{\varepsilon})\nabla u^{\varepsilon}\,\mathrm{d}x\mathrm{d}t\\
			+&\int^{T}_{0}\int_{\Omega}S'(u^{\varepsilon})\Phi_{\varepsilon}(x,t,u^{\varepsilon})\nabla \varphi \,\mathrm{d}x\mathrm{d}t
			+\int^{T}_{0}\int_{\Omega}S''(u^{\varepsilon})\varphi\Phi_{\varepsilon}(x,t,u^{\varepsilon})\nabla u^{\varepsilon}\,\mathrm{d}x\mathrm{d}t \\
			&=\int^{T}_{0}\int_{\Omega}f^{\varepsilon}S'(u^{\varepsilon})\varphi \,\mathrm{d}x\mathrm{d}t
		\end{aligned}
	\end{equation}
	Since $S$ is bounded and continuous, from \eqref{uuq} we have
	\[
	S(u^{\varepsilon}) \to S(u) \quad \text{a.e. in } Q,
	\]
	and
	\[
	S(u^{\varepsilon}) \rightharpoonup S(u) \text{ weakly in } (L^\infty(Q))^*.
	\] 
	We conclude
	\begin{equation}\label{cvg1}
		\int^{T}_{0}\int_{\Omega}\frac{\partial S(u^{\varepsilon})}{\partial t}\,\mathrm{d}x\mathrm{d}t
		\rightarrow
		\int^{T}_{0}\int_{\Omega}\frac{\partial S(u)}{\partial t}\,\mathrm{d}x\mathrm{d}t.
	\end{equation}
	By the assumption $\text{supp}S'\subset[-M,M]$, we have
	\[
	S'(u^{\varepsilon})a(x,t,\nabla u^{\varepsilon})=S'((u^{\varepsilon}))a(x,t,\nabla T_{M}(u^{\varepsilon})),
	\]
	and
	\[
	S''(u^{\varepsilon})a(x,t,\nabla u^{\varepsilon})\nabla u^{\varepsilon}=S''(u^{\varepsilon})a(x,t,\nabla T_{M}(u^{\varepsilon}))\nabla T_{M}(u^{\varepsilon}).
	\]
	Using \eqref{uuq} and \eqref{atkatk} it results
	\begin{equation}\label{spmp}
		\begin{aligned}[b]
			&\int^{T}_{0}\int_{\Omega}\left|S'(u^{\varepsilon})a(x,t,\nabla T_{M}(u^{\varepsilon}))-S'(u)a(x,t,\nabla T_{M}(u))\right|^{p'}\,\mathrm{d}x\mathrm{d}t\\
			\leq& 2^{p}\int_{0}^{T}\int_{\Omega}\left|S'(u^{\varepsilon})\right|^{p'}[a(x,t,\nabla T_{M}(u^{\varepsilon}))-a(x,t,\nabla T_{M}(u))]^{p'}\,\mathrm{d}x\mathrm{d}t\\
			&+2^{p}\int^{T}_{0}\int_{\Omega}\left|S'(u^{\varepsilon})-S'(u)\right|^{p'}(a(x,t,\nabla T_{M}(u)))^{p'}\,\mathrm{d}x\mathrm{d}t\\
			\leq&2^{p}M^{p'}\int^{T}_{0}\int_{\Omega}[a(x,t,\nabla T_{M}(u^{\varepsilon}))-a(x,t,\nabla T_{M}(u))]^{p'}\,\mathrm{d}x\mathrm{d}t\\
			&+2^{p}\left|S'(u^{\varepsilon})-S'(u)\right|^{p'}_{L^{\infty}(Q)}\int^{T}_{0}\int_{\Omega}(a(x,t,\nabla T_{M}(u)))^{p'}\,\mathrm{d}x\mathrm{d}t.
		\end{aligned}
	\end{equation}
	Since \eqref{spmp} tends to \(0\) as $\varepsilon\rightarrow0$,
	\begin{equation}
		S'(u^{\varepsilon})a(x,t,\nabla T_{M}(u^{\varepsilon}))\rightarrow S'(u)a(x,t,\nabla T_{M}(u))\text{ in }(L^{p'}(Q))^{N}.
	\end{equation}
	Similarly,
	$$
	\int^{T}_{0}\int_{\Omega}\frac{\partial S'(u^{\varepsilon})}{\partial t}\,\mathrm{d}x\mathrm{d}t\rightarrow\int^{T}_{0}\int_{\Omega}\frac{\partial S'(u)}{\partial t}\,\mathrm{d}x\mathrm{d}t
	$$
	Then
	\[
	S''(u^{\varepsilon}) \overset{*}{\rightharpoonup} S''(u^*) \quad \text{ weakly-* in } L^\infty(Q).
	\] 
	And
	\begin{equation*}
		\iint_{Q}S''(u^{\varepsilon})\nabla T_{M}(u^{\varepsilon})\varphi\rightarrow\iint_{Q}S''(u^{*})\nabla T_{M}(u)\varphi=\iint_{Q}S''(u)\nabla T_{M}(u^{*})\varphi
	\end{equation*}
	is held. This deduces $u^{*}=u$ and
		\[
	S''(u^{\varepsilon}) \overset{*}{\rightharpoonup} S''(u) \quad \text{ weakly-* in } L^\infty(Q).
	\] 
	A direct computation yields
	\begin{equation}\label{2ps}
		\begin{aligned}[b]
		&\int_{0}^{T}\int_{\Omega}\left|S''(u^{\varepsilon})a(x,t,\nabla T_{M}u^{\varepsilon})\nabla T_{M}(u^{\varepsilon})-S''(u)a(x,t,\nabla T_{M}(u))\right|\,\mathrm{d}x\mathrm{d}t\\
		\leq&2^{p}\int^{T}_{0}\int_{\Omega}|S''(u^{\varepsilon})|[a(x,t,\nabla T_{M}(u^{\varepsilon}))\nabla T_{M}(u^{\varepsilon})-a(x,t,\nabla T_{M}(u))\nabla T_{M}(u)]\,\mathrm{d}x\mathrm{d}t\\
		&+2^{p}\int^{T}_{0}\int_{\Omega}|S''(u^{\varepsilon})-S''(u)|a(x,t,\nabla T_{M}(u))\nabla T_{M}(u)\,\mathrm{d}x\mathrm{d}t\\
		\leq&2^{p}\|S''(u^{\varepsilon})\|_{L^{\infty}(Q)}\int_{0}^{T}\int_{\Omega}[a(x,t,\nabla T_{M}(u^{\varepsilon}))\nabla T_{M}(u^{\varepsilon})-a(x,t,\nabla T_{M}(u))\nabla T_{M}(u)]\,\mathrm{d}x\mathrm{d}t\\
		&+2^{p}\|S''(u^{\varepsilon})-S''(u)\|_{L^{\infty}(Q)}\int^{T}_{0}\int_{\Omega}a(x,t,\nabla T_{M}(u))\nabla T_{M}(u)\,\mathrm{d}x\mathrm{d}t.
		\end{aligned}
	\end{equation}
	Since \eqref{2ps} tends to \(0\) as $\varepsilon\rightarrow0$,
	\begin{equation*}
		S''(u^{\varepsilon})a(x,t,\nabla T_{M}(u^{\varepsilon}))\nabla T_{M}(u^{\varepsilon})\rightarrow S''(u)a(x,t,\nabla T_{M}(u))\nabla T_{M}(u) \text{ in }L^{1}(Q).
	\end{equation*}
	From
	\[
	S'(u)a(x,t,\nabla T_{M}(u))=S'(u)a(x,t,\nabla u)
	\]
	and
	\[
	S''(u)a(x,t,\nabla T_{M}(u))\nabla T_{M}(u)=S''(u)a(x,t,\nabla u)\nabla u,
	\]
	we obtain
	\begin{equation}\label{cvg2}
		S'(u^{\varepsilon})a(x,t,\nabla u^{\varepsilon})\rightarrow S'(u)a(x,t,\nabla u)\text{ in }\left(L^{p'}(Q)\right)^{N}
	\end{equation}
	and
	\begin{equation}\label{cvg3}
		S''(u^{\varepsilon})a(x,t,\nabla u^{\varepsilon})\nabla u^{\varepsilon}\rightarrow S''(u)a(x,t,\nabla u)\nabla u\text{ in }L^{1}(Q).
	\end{equation}
	Similarly,
	\[
	S'(u^{\varepsilon})\Phi_{\varepsilon}(x,t,u^{\varepsilon})=S'(u^{\varepsilon})\Phi_{\varepsilon}(x,t,T_{M}(u^{\varepsilon}))\,\, a.e.(x,t)\in Q
	\]
	and
	\[
	S''(u^{\varepsilon})\Phi_{\varepsilon}(x,t,u^{\varepsilon})\nabla u^{\varepsilon}=S''(u^{\varepsilon})\Phi_{\varepsilon}(x,t,T_{M}(u^{\varepsilon}))\nabla T_{M}(u^{\varepsilon})\,\, a.e.(x,t)\in Q.
	\]
	From \eqref{phiapproximate} we know that
	\begin{equation}
		|S'(u^{\varepsilon})\Phi_{\varepsilon}(x,t,T_{M}(u^{\varepsilon}))\leq M^{\gamma}c(x,t)|S'(u^{\varepsilon})|
	\end{equation}
	Then by \eqref{uuq}
	\begin{equation}\label{cvg4}
		S'(u^{\varepsilon})\Phi_{\varepsilon}(x,t,u^{\varepsilon})\rightarrow S'(u)\Phi(x,t,u) \text{ in }\left(L^{p'}(Q)\right)^{N}.
	\end{equation}
	In the same way
	\begin{equation*}
	S''(u^{\varepsilon})\Phi_{\varepsilon}(x,t,u^{\varepsilon})\rightarrow S''(u)\Phi(x,t,u)\,\, a.e.(x,t)\in Q,
	\end{equation*}
	From \eqref{tklp} we get
	\begin{equation}\label{cvg5}
		S''(u^{\varepsilon})\Phi_{\varepsilon}(x,t,u^{\varepsilon})\nabla u^{\varepsilon}\rightarrow S''(u)\Phi(x,t,u)\nabla u.
	\end{equation}
	Since 
	\begin{equation}
		\begin{aligned}[b]
			&\int^{T}_{0}\int_{\Omega}|f^{\varepsilon}S'(u^{\varepsilon}\varphi)-fS'(u)\varphi|\,\mathrm{d}x\mathrm{d}t \\
			\leq& \int^{T}_{0}\int_{\Omega}f^{\varepsilon}|S'(u^{\varepsilon})-S'(u)|\varphi \,\mathrm{d}x\mathrm{d}t
			+\int^{T}_{0}\int_{\Omega}|f^{\varepsilon}|S'(u)\varphi \, \mathrm{d}x\mathrm{d}t\\
			\leq& \|S'(u^{\varepsilon})-S'(u)\|_{L^{\infty}}\|\varphi\|_{L^{\infty}}\int^{T}_{0}\int_{\Omega}f^{\varepsilon}\,\mathrm{d}x\mathrm{d}t
			+\|S'(u)\|_{L^{\infty}}\|\varphi\|_{L^{\infty}}\int^{T}_{0}\int_{\Omega}|f^{\varepsilon}-f|\,\mathrm{d}x\mathrm{d}t,
		\end{aligned}
	\end{equation}
	using \eqref{fst}and the property of \(S\) we obtain
	\begin{align}\label{cvg6}
		f^{\varepsilon}S'(u^{\varepsilon})\rightarrow fS'(u)\text{ in }L^{1}(Q).
	\end{align}
	Now Combining \eqref{cvg1}-\eqref{cvg6}
	\begin{equation}
		\begin{aligned}[b]
			-\int_{\Omega}\varphi(&x,0)S(u_0)\,\mathrm{d}x
			+\int^0_T \int_{\Omega}S(u)\frac{\partial \varphi}{\partial t}\,\mathrm{d}x\mathrm{d}t
			+\int^0_T \int_{\Omega}S'a(x,t,\nabla u)\nabla \varphi \,\mathrm{d}x\mathrm{d}t \\
			+\int^0_T& \int_{\Omega} S''(u)\varphi a(x,t,\nabla u)\nabla u \,\mathrm{d}x\mathrm{d}t 
			+\int^0_T \int_{\Omega}\int_{\Omega}S'(u)\Phi(x,t,u)\nabla \varphi \,\mathrm{d}x\mathrm{d}t \\ 
			+&\int^0_T \int_{\Omega}S''(u)\varphi\Phi(x,t,u)\nabla u \,\mathrm{d}x\mathrm{d}t \\
			&=\int^0_T \int_{\Omega}fS'(u)\varphi \,\mathrm{d}x\mathrm{d}t.
		\end{aligned}
	\end{equation}
	This completes the proof that \(u\) is a renormalized solution to Problem \eqref{equations}.
\end{proof}
\begin{remark}
	If $u$ is a renormalized solution, then $u$ satisfies the following conditions:
		\begin{equation*}
			\lim_{n\rightarrow\infty}\frac{1}{n}\iint_{\{(x,t)\in Q:|u|<n\}}|\Phi(x,t,u)||\nabla u|\,\mathrm{d}x\mathrm{d}t=0.
		\end{equation*}
\end{remark}
\begin{proof}
	From \eqref{as5} we get
	\begin{equation*}
		\iint_{Q}|\Phi(x,t,u)||\nabla u|dxdt\leq\iint_{Q}c(x,t)|u|^{\gamma}|\nabla T_{n}(u)|\,\mathrm{d}x\mathrm{d}t.
	\end{equation*}
	Using Hölder's inequality and the Gagliardo-Nirenberg inequality we obtain
	\begin{equation}
		\begin{aligned}
			&\iint_{Q}c(x,t)|u|^{\gamma}|\nabla u|\,\mathrm{d}x\mathrm{d}t\\
			\leq&\left(\iint_{Q}c^{r}(x,t)\right)^{\frac{1}{r}}\left(\iint_{Q}|T_{n}(u)|^{\frac{(N+2)p}{N}}\right)^{N(p-1)}p(N+p)
			\left(\iint_{Q}|\nabla T_{n}(u)|^{p}\right)^{\frac{1}{p}}\\
			\leq& n^{\frac{1}{r}}C\|c(x,t)\|_{L^{r}(Q)}\|u\|^{\frac{1}{r}}_{L^{\infty}(0,T;L^{1}(\Omega))}\left(\iint_{Q}|\nabla u|^{p}\,\mathrm{d}x\mathrm{d}t\right)^{\frac{N+1}{N+p}},
		\end{aligned}
	\end{equation}
	where $1-\frac{1}{r}=\frac{N+1}{N+p}$. Recalling assumption \eqref{as1} and \eqref{iintau0}
	\begin{equation*}
		\lim_{n\rightarrow\infty}\frac{1}{n}\iint_{Q}c(x,t)|T_{n}(u)|^{\gamma}|\nabla T_{n}(u)|\,\mathrm{d}x\mathrm{d}t=0.
	\end{equation*}
	Thus
	\begin{equation*}
		\lim_{n\rightarrow\infty}\frac{1}{n}\iint_{\{(x,t)\in Q:|u|<n\}}|\Phi(x,t,u)||\nabla u|\,\mathrm{d}x\mathrm{d}t=0.
	\end{equation*}
\end{proof}	
\section{Uniqueness of Renormalized Solution}
\begin{theorem}
	If
	\begin{equation}
		|\Phi(x,t,r_{1})-\Phi(x,t,r_{2})|\leq|r_{1}-r_{2}|,\quad r_{1},r_{2}\in R,\quad(x,t)\in Q,
	\end{equation}
	then \eqref{equations} has a unique renormalized solution.
\end{theorem}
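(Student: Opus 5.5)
We follow the standard uniqueness argument for renormalized solutions in the style of Blanchard--Murat--Redwane and Di Nardo et al.\ \cite{DiNardo2011}. The key point is that the Lipschitz condition on $\Phi$ makes the non-coercive term controllable on thin level sets of $u-v$.

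Let $u$ and $v$ be two renormalized solutions of \eqref{equations} with the same data $f,u_0$. For $n>0$ let $S_n$ be the Lipschitz function with $S_n(r)=r$ for $|r|\le n$, $S_n'(r)=n+1-|r|$ for $n\le|r|\le n+1$, and $S_n'=0$ for $|r|\ge n+1$ (smoothed if necessary to be $W^{2,\infty}$). Then $\operatorname{supp}S_n'\subset[-n-1,n+1]$ and $S_n''=-\operatorname{sign}(r)\chi_{\{n<|r|<n+1\}}$. Renormalizing both equations with $S_n$, \eqref{longeqv} gives two equations in $L^1(Q)+L^{p'}(0,T;W^{-1,p'}_{\Gamma_d}(\Omega))$ for $S_n(u)_t$ and $S_n(v)_t$.

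Subtract them and test with $\frac1\sigma T_\sigma(S_n(u)-S_n(v))\chi_{(0,\tau)}$. This is justified through the Landes time-regularization $(\cdot)_\mu$ already used in the proof of Theorem \ref{tkstg}, together with Lemma 2.1 of \cite{Porretta1999}. The time term gives $\frac1\sigma\int_\Omega \Theta_\sigma(S_n(u)-S_n(v))(\tau)\,dx\ge0$, since the initial data coincide. The remaining terms are handled in three groups.
\begin{itemize}
\item[(i)] \emph{Diffusion.} On $\{|u|<n,|v|<n\}$ the term $\frac1\sigma\iint_{\{|u-v|<\sigma\}}(a(\nabla u)-a(\nabla v))\cdot\nabla(u-v)$ is nonnegative by \eqref{as4}. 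Cross terms on $\{n\le|u|\le n+1\}$ or $\{n\le |v|\le n+1\}$ are bounded by $C\frac1n\iint_{\{|u|<n+1\}}a(\nabla u)\nabla u$ and the same quantity for $v$, using $|T_\sigma/\sigma|\le1$ and the fact that $|S_n'|\le1$. By \eqref{iintau0}, taken along the integer sequence as in Lemma \ref{up0}, these vanish as $n\to\infty$. The $S_n''$ terms are bounded the same way.
\item[(ii)] \emph{Convection.} The $\Phi$ contribution splits into a boundary-layer part, controlled by the Remark following the existence theorem ($\frac1n\iint_{\{|u|<n\}}|\Phi(u)||\nabla u|\to0$), and the main part
\[
\frac1\sigma\iint_{\{|S_n(u)-S_n(v)|<\sigma\}}\bigl(\Phi(x,t,u)-\Phi(x,t,v)\bigr)\cdot\nabla(S_n(u)-S_n(v)).
\]
By the Lipschitz hypothesis, $|\Phi(u)-\Phi(v)|\le|u-v|<\sigma$ on the region where $|u|,|v|\le n$, so the integrand is bounded by $|\nabla (T_{n+1}u-T_{n+1}v)|\chi_{\{0<|S_n(u)-S_n(v)|<\sigma\}}$. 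This tends to $0$ as $\sigma\to0$ by dominated convergence, since $\nabla(S_n(u)-S_n(v))=0$ a.e.\ on $\{S_n(u)=S_n(v)\}$.
\item[(iii)] \emph{Source.} The right-hand side $\frac1\sigma\iint f(S_n'(u)-S_n'(v))T_\sigma(\cdot)$ is bounded by $\iint_{\{n<|u|\}\cup\{n<|v|\}}|f|$, which tends to $0$ as $n\to\infty$.
\end{itemize}

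Letting $\sigma\to0$ first, we get $\frac1\sigma\Theta_\sigma(r)\to\frac12\cdot$ well, more precisely $\frac1\sigma\Theta_\sigma(r)\to|r|$ up to a factor. Hence
\[
\int_\Omega|S_n(u)-S_n(v)|(\tau)\,dx\le \omega(n),
\]
with $\omega(n)\to0$ as $n\to\infty$. Then letting $n\to\infty$ and using Fatou's lemma together with $u,v$ finite a.e., we conclude $u=v$ a.e.\ in $Q$ for every $\tau$.

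The main obstacle is making the order of limits legitimate in step (ii) and the matching boundary-layer estimates. In particular, the cross terms mixing $\Phi(u)$ with $\nabla S_n(v)$ on sets where only one of $u,v$ is truncated need care. Our first attempt is to bound them by $\frac1n\iint_{\{|v|<n+1\}}c|T_{n+1}v|^\gamma|\nabla T_{n+1}v|$, exactly as in the Remark, and to use the Gagliardo--Nirenberg estimate from \eqref{c8c9} with $\|c\|_{L^m(E_R)}$ small on large level sets. Justifying the test function also requires the time-regularization argument, which we take from \cite{Porretta1999} rather than reprove.
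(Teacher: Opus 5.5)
There is a genuine gap in step (i) and in the ``boundary-layer part'' of step (ii), exactly where you flag the main obstacle. The terms you call cross terms come from the divergence parts of the equation. There you pair $S_n'(u)a(x,t,\nabla u)-S_n'(v)a(x,t,\nabla v)$ (and the $\Phi$-analogue) with
\[
\nabla\Bigl(\tfrac1\sigma T_\sigma\bigl(S_n(u)-S_n(v)\bigr)\Bigr)=\tfrac1\sigma\,\chi_{\{|S_n(u)-S_n(v)|<\sigma\}}\,\nabla\bigl(S_n(u)-S_n(v)\bigr),
\]
so on the layers $\{n<|u|<n+1\}\cup\{n<|v|<n+1\}$ they carry an uncompensated factor $\frac1\sigma$.

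The bound $|T_\sigma/\sigma|\le 1$ applies only to the zero-order $S_n''$ terms, not here. A term like $\frac1\sigma\iint_{\{|S_n(u)-S_n(v)|<\sigma\}}S_n'(u)S_n'(v)\,a(x,t,\nabla u)\cdot\nabla v$ is not controlled by any layer energy uniformly in $\sigma$ on general grounds. Since you send $\sigma\to0$ at fixed $n$, this is precisely where the estimate can blow up. Your suggested remedy (the Gagliardo--Nirenberg bound from the Remark, or \eqref{iintau0}) controls energies, not this weight.

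A smaller point: \eqref{iintau0} is a Ces\`aro-type condition. It does not give ``layer energy $\le\frac Cn\iint_{\{|u|<n+1\}}a(x,t,\nabla u)\cdot\nabla u$''. It only yields a subsequence $n_j$ along which the layer quantities $\iint_{\{n_j<|u|<n_j+1\}}\bigl(a(x,t,\nabla u)\cdot\nabla u+|\Phi(x,t,u)||\nabla u|\bigr)$, and their $v$-counterparts, tend to zero. That is enough, because Fatou only needs a subsequence.

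\emph{How to close it within your ordering.} Show that the mismatch between $S_n'(u)$ and $S_n'(v)$ is itself of order $|S_n(u)-S_n(v)|$. Write $\lambda=S_n'(u)$, $\mu=S_n'(v)$ and $a(\xi)$ for $a(x,t,\xi)$. Then
\[
\begin{aligned}
(\lambda a(\nabla u)-\mu a(\nabla v))\cdot(\lambda\nabla u-\mu\nabla v)
&=\lambda\mu\,\bigl(a(\nabla u)-a(\nabla v)\bigr)\cdot(\nabla u-\nabla v)\\
&\quad+(\lambda-\mu)\bigl(\lambda\,a(\nabla u)\cdot\nabla u-\mu\,a(\nabla v)\cdot\nabla v\bigr).
\end{aligned}
\]
For your $S_n$ one has $(S_n'(r))^2=1-2(|S_n(r)|-n)$ for $|r|\ge n$. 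A case check then gives $|\lambda-\mu|\max(\lambda,\mu)\le 2|S_n(u)-S_n(v)|$ whenever $|S_n(u)-S_n(v)|<\frac12$ and $n\ge1$. Hence the non-monotone part is dominated by $2\chi_{\{0<|S_n(u)-S_n(v)|<\sigma\}}\bigl(a(\nabla T_{n+1}u)\cdot\nabla T_{n+1}u+a(\nabla T_{n+1}v)\cdot\nabla T_{n+1}v\bigr)$, which vanishes as $\sigma\to0$. The $\Phi$ divergence terms go the same way, combined with the Lipschitz bound.

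\emph{How the paper avoids the issue.} It reverses the roles of the two parameters. It renormalizes with $T_s^\sigma$ (a truncation at $s$ with a transition layer of width $\sigma$), tests with $\frac1kT_k(T_s^\sigma(u)-T_s^\sigma(v))$, and sends the \emph{layer width} to zero first at fixed $k$. Because $a$ does not depend on $u$ and $a(x,t,0)=0$, the flux becomes $\chi_{\{|u|\le s\}}a(x,t,\nabla u)=a(x,t,\nabla T_s(u))$. So $I_1$ reduces to the monotone quantity $\frac1k\iint[a(x,\tau,\nabla T_s(u))-a(x,\tau,\nabla T_s(v))]\cdot\nabla T_k(T_s(u)-T_s(v))\ge0$, with no diffusion cross terms at all. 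The factor $\frac1\sigma$ then appears only in the $S''$ terms $I_2,I_4$, where $|T_k/k|\le1$ gives $\frac1\sigma\Gamma(u,v,s,\sigma)$. The convection cross terms $I_3^1,I_3^2$ live on $\{s-k<|u|\le s\}$ (resp.\ for $v$) and give $\frac1k\Gamma(u,v,s,k)$. Both are then removed by a $\liminf_{s\to\infty}$ based on the energy condition.

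Once repaired, your width-one layers need only an integer subsequence, rather than this derivative-in-$s$ argument.
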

\begin{definition}
	For $s > 0$, $\sigma > 0$, define $T_s^\sigma(u)$ as follows:
	\begin{equation}
		T_s^\sigma(0) = 0, 
	\end{equation}
	\begin{equation}
		T_s^\sigma(r) =
		\begin{cases}
			1, & |r| < s, \\[4pt]
			\dfrac{1}{\sigma}(s + \sigma - |r|), & s \leq |r| \leq s + \sigma, \\[10pt]
			0, & |r| > s + \sigma.
		\end{cases}
	\end{equation}
\end{definition}
\begin{proof}
	Suppose \(u\) and \(v\) are two renormalized solution of problem \eqref{equations} with the same initial value \(f\) and \(u_0\).
	Recalling \eqref{longeqv}, we set
	\[
	S=T^{\sigma}_{s}
	\]
	and
	\[
	\varphi=\frac{1}{k}T_{k}(T^{\sigma}_{s}(u)-T^{\sigma}_{s}(v)).
	\]
	Substitute $u$ and $v$ into the equality respectively, and then subtract these two we obtain
	\begin{equation}\label{i012345}
		I_{0}+I_{1}+I_{2}+I_{3}+I_{4}=I_{5}.
	\end{equation}
	Here
	\begin{equation}
		\begin{aligned}[b]
			I_{0}=\frac{1}{k}\int^{t}_{0}\left\langle \frac{\partial (T^{\sigma}_{s}(u)-T^{\sigma}_{s}(v))}{\partial \tau}, T_{k}\left(T^{\sigma}_{s}(u)-T^{\sigma}_{s}(v)\right) \right\rangle \,\mathrm{d}\tau
		\end{aligned}
	\end{equation}
	\begin{equation}
		\begin{aligned}[b]
			I_{1}=\frac{1}{k}\int^{t}_{0}\int_{\Omega}\bigl[(T^{\sigma}_{s})'(u)a(x,\tau,\nabla u)-(T^{\sigma}_{s})'(v)a(x,\tau,\nabla v)\bigr] \nabla T_{k}\bigl(T^{\sigma}_{s}(u)-T^{\sigma}_{s}(v)\bigr) \,\mathrm{d}x \mathrm{d}\tau
		\end{aligned}
	\end{equation}
	\begin{equation}
		\begin{aligned}[b]
			I_{2}=&\frac{1}{k}\int_{0}^{t}\int_{\Omega}(T^{\sigma}_{s})''(u)a(x,\tau,\nabla u)\nabla u \, T_{k}\bigl(T^{\sigma}_{s}(u)-T_{s}^{\sigma}(v)\bigr) \,\mathrm{d}x \mathrm{d}\tau \\
			&- \frac{1}{k}\int_{0}^{t}\int_{\Omega}(T^{\sigma}_{s})''(v)a(x,\tau,\nabla v)\nabla v \, T_{k}\bigl(T^{\sigma}_{s}(u)-T_{s}^{\sigma}(v)\bigr) \,\mathrm{d}x \mathrm{d}\tau,
		\end{aligned}
	\end{equation}
	\begin{equation}
		\begin{aligned}[b]
			I_{3}=\frac{1}{k}\int_{0}^{t}\int_{\Omega}\bigl[(T^{\sigma}_{s})'(u)\Phi(x,\tau,u)-(T^{\sigma}_{s})'(v)\Phi(x,\tau,v)\bigr] \nabla T_{k}\bigl(T^{\sigma}_{s}(u)-T^{\sigma}_{s}(v)\bigr) \,\mathrm{d}x \mathrm{d}\tau,
		\end{aligned}
	\end{equation}
	\begin{equation}
		\begin{aligned}[b]
			I_{4}=&\frac{1}{k}\int_{0}^{t}\int_{\Omega}(T^{\sigma}_{s})''(u)\Phi(x,\tau,\nabla u)\nabla u \, T_{k}\bigl(T^{\sigma}_{s}(u)-T_{s}^{\sigma}(v)\bigr) \,\mathrm{d}x \mathrm{d}\tau\\
			&- \frac{1}{k}\int_{0}^{t}\int_{\Omega}(T^{\sigma}_{s})''(v)\Phi(x,\tau,\nabla v)\nabla v \, T_{k}\bigl(T^{\sigma}_{s}(u)-T_{s}^{\sigma}(v)\bigr) \,\mathrm{d}x \mathrm{d}\tau,
		\end{aligned}
	\end{equation}
	\begin{equation}
		\begin{aligned}[b]
			I_{5}=\frac{1}{k}\int^{t}_{0}\int_{\Omega}f\bigl[(T^{\sigma}_{s})'(u)-(T^{\sigma}_{s})'(v)\bigr] T_{k}\bigl(T^{\sigma}_{s}(u)-T_{s}^{\sigma}(v)\bigr) \,\mathrm{d}x \mathrm{d}\tau.
		\end{aligned}
	\end{equation}
	For any $t \in (0, T)$, the notation $\langle\cdot, \cdot\rangle$ denotes the duality pairing between $L^1(\Omega) + W^{-1,p'}(\Omega)$ and $L^\infty(\Omega) \cap W_{\Gamma_d}^{1,p}(\Omega)$.
	Taking the limits in \eqref{i012345} as $\sigma \to 0$, $k \to 0$, and $s \to \infty$, it is clear that for any $t \in (0, T)$,
	\begin{equation}\label{tsstg}
		T_s^\sigma(u) \to T_s(u) \quad \text{a.e. in } Q \text{ and strongly in } L^p(0,T; W_{\Gamma_d}^{1,p}(\Omega)),
	\end{equation}
	\begin{equation}\label{tustg}
		(T_s^\sigma)'(u) \to \chi_{\{|u| \leq s\}} \quad \text{a.e. in } Q \text{ and strongly in } L^q(\Omega \times (0,t)).
	\end{equation}
	For $I_0$,
	\[
	I_0 = \frac{1}{k} \int_0^t \left\langle \frac{T_s^\sigma(u) - T_s^\sigma(v)}{\partial \tau}, T_k(T_s^\sigma(u) - T_s^\sigma(v)) \right\rangle \,\mathrm{d}\tau
	\]
	\[
	= \frac{1}{k} \int_\Omega \Theta_k(T_s^\sigma(u) - T_s^\sigma(v))(x,t) \, \mathrm{d}x - \frac{1}{k} \int_\Omega \Theta_k(T_s^\sigma(u) - T_s^\sigma(v))(x,0) \, \mathrm{d}x.
	\] 
	Then
	\begin{equation}\label{tsint0}
		\lim_{k \to 0} \lim_{\sigma \to 0} I_0 = \int_{\Omega} \bigl| T_s(u)(t) - T_s(v)(t) \bigr| \, \mathrm{d}x.
	\end{equation}
	For $I_1$, by \eqref{tsstg} and \eqref{tustg}, together with \eqref{as1}, and since $\operatorname{supp} (T_s^\sigma)' \subset [-s-\sigma, s+\sigma]$, for any $t \in (0, T)$ we have
	\[
	\lim_{\sigma \to 0} I_1 = \frac{1}{k} \int_0^t \int_\Omega \bigl[ \chi_{\{|u| \leq s\}} a(x, \tau, \nabla u) - \chi_{\{|v| \leq s\}} a(x, \tau, \nabla v) \bigr] \nabla T_k\bigl(T_s(u) - T_s(v)\bigr) \, \mathrm{d}x\mathrm{d}\tau
	\]
	\[
	= \frac{1}{k} \int_0^t \int_\Omega \bigl[ a(x, \tau, \nabla T_s(u)) - a(x, \tau, \nabla T_s(v)) \bigr] \nabla T_k\bigl(T_s(u) - T_s(v)\bigr) \, \mathrm{d}x  \mathrm{d}\tau \geq 0.
	\] 	
	Then
	\[
	\lim_{\sigma \to 0} \liminf_{k \to 0}  I_1 \geq 0.
	\] 
	For $I_5$,
	\[
	\lim_{\sigma \to 0} I_5 = \frac{1}{k} \int_0^t \int_\Omega f \times \bigl( \chi_{\{|u| \leq s\}} - \chi_{\{|v| \leq s\}} \bigr) T_k\bigl(T_s(u) - T_s(v)\bigr) \, \mathrm{d}x \mathrm{d}\tau,
	\]
	\[
	\lim_{k \to 0} \lim_{\sigma \to 0} I_5 = \int_0^t \int_\Omega f \times \bigl( \chi_{\{|u| \leq s\}} - \chi_{\{|v| \leq s\}} \bigr) \operatorname{sgn}(u - v) \, \mathrm{d}x \mathrm{d}\tau.
	\] 
	where $\operatorname{sgn}(r) = \dfrac{r}{|r|}$ for any $r \neq 0$, and $\operatorname{sgn}(0) = 0$. Since $u$ and $v$ are finite almost everywhere in $\Omega \times (0, T)$ and $f \in L^1(Q)$, by the Lebesgue dominated convergence theorem we obtain
	\[
	\lim_{s \to \infty} \lim_{k \to 0} \lim_{\sigma \to 0} I_5 = 0.
	\]
	Next we show that for any $t \in (0, T)$,
	\begin{equation}\label{m1}
		|I_2| + |I_4| \leq \frac{M_1}{\sigma} \Gamma(u, v, s, \sigma),
	\end{equation}
	where $M_1$ is a constant independent of $s$, $k$, and $\sigma$. Using the definition of $T_s^\sigma$ and the fact that $\nabla u = 0$ a.e. on $\{ (x, t) : |u(x, t)| = r \}$ for any $r \in \mathbb{R}$, together with $a(x, t, \xi) \cdot \xi \geq 0$, we have
	\begin{equation}\label{i2}
		\begin{aligned}[b]
			|I_{2}|\leq&\frac{1}{k}\int^{t}_{0}\int_{\Omega}(T^{\sigma}_{s})''(u)a(x,t,\nabla u)\nabla u T_{k}(T_{s}^{\sigma}(u)-T_{s}^{\sigma}(v))\,\mathrm{d}x\mathrm{d}t\\
			&+\frac{1}{k}\int^{t}_{0}\int_{\Omega}(T^{\sigma}_{s})''(u)
			a(x,t,\nabla v)\nabla v T_{k}(T^{\sigma}_{s}(u)-T^{\sigma}_{s}(v))\,\mathrm{d}x\mathrm{d}t\\
			\leq&\int^{t}_{0}\int_{\Omega}|(T^{\sigma}_{s})''(u)a(x,t,\nabla u)\nabla u|\,\mathrm{d}x\mathrm{d}t
			+\int^{t}_{0}\int_{\Omega}|(T^{\sigma}_{s})''(v)a(x,t,\nabla v)\nabla v|\,\mathrm{d}x\mathrm{d}t\\
			\leq&\frac{1}{\sigma}\left[\iint_{\{s\leq|u|\leq s+\sigma\}}a(x,t,\nabla u)\nabla u\,\mathrm{d}x\mathrm{d}t
			+\iint_{\{s\leq|v|\leq s+\sigma\}}a(x,t,\nabla v)\nabla v\,\mathrm{d}x\mathrm{d}t\right]
		\end{aligned}
	\end{equation}
	Analogously, 
	\begin{equation}\label{i4}
		\begin{aligned}[b]
			|I_{4}|\leq&\frac{1}{k}\int^{t}_{0}\int_{\Omega}|(T^{\sigma}_{s})''(u)\Phi(x,t,u)\nabla u T_{k}(T^{\sigma}_{s}(u)-T^{\sigma}_{s}(v))|\,\mathrm{d}x\mathrm{d}t\\
			&+\frac{1}{k}\int^{t}_{0}\int_{\Omega}|(T^{\sigma}_{s})''(v)\Phi(x,t,v)\nabla vT_{k}(T^{\sigma}_{s}(u)-T^{\sigma}_{s}(v))\,\mathrm{d}x\mathrm{d}t\\
			\leq&\int^{t}_{0}\int_{\Omega}|(T^{\sigma}_{s})''(u)\Phi(x,t,u)\nabla u|\,\mathrm{d}x\mathrm{d}t
			+\int^{t}_{0}\int_{\Omega}|(T^{\sigma}_{s})''(v)\Phi(x,t,v)\nabla v|\,\mathrm{d}x\mathrm{d}t\\
			\leq& \frac{1}{\sigma}\left(\iint_{\{s\leq|u|\leq s+\sigma\}}|\Phi(x,t,u)\nabla u|\,\mathrm{d}x\mathrm{d}t
			+\iint_{\{s\leq|v|\leq s+\sigma\}}|\Phi(x,t,v)\nabla v|\,\mathrm{d}x\mathrm{d}t\right).
		\end{aligned}
	\end{equation}
	Therefore, from \eqref{i2} and \eqref{i4} we obtain \eqref{m1}. For $I_5$, we have
	\begin{equation}
		\begin{aligned}[b]
			\limsup_{\sigma\rightarrow 0}|I_{3}|=&|\frac{1}{k}\int^{t}_{0}\int_{\Omega}|\left[\chi_{\{|u|\leq s\}}\Phi(x,t,u)-\chi_{\{|v|\leq s\}}\Phi(x,t,v)\right]\nabla T_{k}(T_{s}(u)-T_{s}(v))\,\mathrm{d}x\mathrm{d}t|\\
			\leq&\frac{1}{k}\iint_{\{|u|\leq s\}\bigcap\{|v|>s\}}|\Phi(x,t,u)||\nabla T_{k}(T_{s}(u)-s\,\text{sgn}(v))|\,\mathrm{d}x\mathrm{d}t\\
			&+\frac{1}{k}\iint_{\{|v|\leq s\}\bigcap\{|u|>s\}}|\Phi(x,t,v)||\nabla T_{k}(T_{s}(v)-s\,\text{sgn}(u))|\,\mathrm{d}x\mathrm{d}t\\
			&+\frac{1}{k}\iint_{\{|v|\leq s\}\bigcap\{|u|\leq s\}}|\Phi(x,t,u)-\Phi(x,t,v)||\nabla T_{k}(u-v)|\,\mathrm{d}x\mathrm{d}t \\
			=&I_{3}^{1}+I_{3}^{2}+I_{3}^{3}.
		\end{aligned}
	\end{equation}
	Estimating $I_3^1$ and $I_3^2$, we have
	\begin{equation}
		\begin{aligned}[b]
			I_{3}^{1}&\leq\frac{1}{k}\iint_{\{|u|\leq s\}\bigcap\{|v|>s\}\bigcap\{|u-s sgn(v)|<k\}}|\Phi(x,t,u)||\nabla u|\,\mathrm{d}x\mathrm{d}t\\
			&\leq\iint_{\{s-k\leq|u|\leq s\}}|\Phi(x,t,u)||\nabla u|\,\mathrm{d}x\mathrm{d}t,
		\end{aligned}
	\end{equation}
	and
	\begin{equation}
		I_{3}^{2}\leq\frac{1}{k}\iint_{\{s-k\leq|v|\leq s\}}|\Phi(x,t,v)||\nabla v|\,\mathrm{d}x\mathrm{d}t.
	\end{equation}
	By the local Lipschitz condition of the function $\Phi$, there exists $L_s \in L^{p'}(Q)$ such that
	\begin{align*}
		I_{3}^{3}&=\frac{1}{k}\iint_{\{|v|\leq s\}\bigcap\{|u|\leq s\}}|\Phi(x,t,u)-\Phi(x,t,v)||\nabla T_{k}(u-v)|\,\mathrm{d}x\mathrm{d}t\\
		&\leq\frac{1}{k}\iint_{\{0\leq|T_{s}(v)-T_{s}(u)|<k\}}L_{s}(x,t)|T_{s}(u)-T_{s}(v)||\nabla T_{k}(T_{s}(u)-T_{s}(v))|\,\mathrm{d}x\mathrm{d}t\\
		&\leq\iint_{\{0\leq|T_{s}(v)-T_{s}(u)|<k\}}L_{s}(x,t)|\nabla T_{k}(T_{s}(u)-T_{s}(v))|\,\mathrm{d}x\mathrm{d}t\\
		&\leq\iint_{\{0\leq|T_{s}(v)-T_{s}(u)|<k\}}L_{s}(x,t)(|\nabla T_{s}(u)|+|\nabla T_{s}(v)|)\,\mathrm{d}x\mathrm{d}t,
	\end{align*}
	where $L_s \in L^p(Q)$. By \eqref{beforetau0}, $L_s(x,t) \bigl( |\nabla T_s(u)| + |\nabla T_s(v)| \bigr) \in L^1(Q)$. Moreover, as $k \to 0$,
	\[
	\chi_{\{0 \leq |T_s(u) - T_s(v)| < k\}} \to 0 \quad \text{a.e. in } Q,
	\]
	and this sequence is bounded. Then by the Lebesgue dominated convergence theorem, we obtain
	\[
	\lim_{k \to 0} I_3^3 = 0.
	\]
	Consequently,
	\[
	\limsup_{\sigma \to 0} |I_3| \leq \frac{M_1}{k} \Gamma(u,v,s,k) + \omega(k),
	\]
	where $M_1$ is a constant independent of $s$, $\sigma$, and $k$, and $\omega$ is a positive function satisfying
	\[
	\lim_{k \to 0} \omega(k) = 0.
	\] 
	In \eqref{i012345}, we first take the upper limit as $\sigma \to 0$, and then take the upper limit as $k \to 0$. From \eqref{tsint0}, we obtain
	\begin{equation}
		\begin{aligned}[b]
			&\int_{\Omega}|T_{s}(u)-T_{s}(v)|\,\mathrm{d}x \\
			\leq&-\liminf_{\sigma\rightarrow 0}\limsup_{k\rightarrow 0}|I_{1}|+\limsup_{\sigma\rightarrow0}\limsup_{k\rightarrow0}(|I_{2}|+|I_{4}|)\\
			&+\limsup_{\sigma\rightarrow0}
			\limsup_{k\rightarrow0}|I_{3}|+\limsup_{\sigma\rightarrow0}
			\limsup_{k\rightarrow0}|I_{5}|\\
			\leq& M_{1}\limsup_{k\rightarrow0}\frac{1}{k}\Gamma(u,v,s,k)+M_{1}\frac{1}{\sigma}\Gamma(u,v,s,\sigma)+\omega(s)
		\end{aligned}
	\end{equation}
	For any $s > 0$ and any $t \in (0, T)$, we have $\omega(s) \to 0$ as $s \to 0$.
	Since $u$ (resp. $v$) is finite almost everywhere in $Q$, as $s \to \infty$, $T_s(u)(t)$ (resp. $T_s(v)(t)$) converges almost everywhere to $u(t)$ (resp. $v(t)$) for any $t \in (0, T)$. By Fatou's lemma, taking the lower limit as $s \to \infty$, we obtain
	\begin{equation}
		\int_{\Omega}|u(t)-v(t)|\,\mathrm{d}x\leq 2M_{1}\liminf_{s\rightarrow\infty}\limsup_{k\rightarrow0}\frac{1}{k}\Gamma(u,v,s,k).
	\end{equation}
	By Lemma\ref{lemmaboccardo}, we have
	\begin{equation}
		\int_{\Omega} |u(t) - v(t)| \, \mathrm{d}x = 0.
	\end{equation}
	Hence, for any $t \in (0, T)$, $u = v$ a.e. in $Q$. That is, equation \eqref{equations} admits a unique renormalized solution.
\end{proof}
\section*{Acknowledgments}
	
\newpage
\bibliographystyle{plain}
\bibliography{ref.bib}

@book{Adams1975,
	author = {Adams, R. A.},
	title = {Sobolev Spaces},
	publisher = {Academic Press},
	address = {New York},
	year = {1975}
}

@article{Blanchard1997,
	author = {Blanchard, D. and Murat, F.},
	title = {Renormalized solution for nonlinear parabolic problems with $L^1$ data: existence and uniqueness},
	journal = {Proceedings of the Royal Society of Edinburgh},
	year = {1997},
	volume = {127A},
	pages = {1137-1152}
}

@article{Blanchard2001,
	author = {Blanchard, D. and Murat, F. and Redwane, H.},
	title = {Existence and uniqueness of a renormalized solution for a fairly general class of nonlinear parabolic problems},
	journal = {Journal of Differential Equations},
	year = {2001},
	volume = {177},
	pages = {331-374}
}

@article{Boccardo1989,
	author = {Boccardo, L. and Gallouët, T.},
	title = {On some nonlinear elliptic and parabolic equations involving measure data},
	journal = {Journal of Functional Analysis},
	year = {1989},
	volume = {87},
	pages = {149-169}
}

@article{Boccardo1997,
	author = {Boccardo, L. and Dall'Aglio, A. and Gallouët, T. and others},
	title = {Nonlinear parabolic equations with measure data},
	journal = {Journal of Functional Analysis},
	year = {1997},
	volume = {147},
	number = {1},
	pages = {237-258}
}

@article{Boccardo2003,
	author = {Boccardo, L. and Orsina, L. and Porretta, A.},
	title = {Some noncoercive parabolic equations with lower order terms in divergence form},
	journal = {Journal of Evolution Equations},
	year = {2003},
	volume = {3},
	number = {3},
	pages = {407-418}
}

@article{Boccardo1988,
	author = {Boccardo, L. and Murat, F. and Puel, J. P.},
	title = {Existence of bounded solutions for nonlinear elliptic unilateral problems},
	journal = {Annali di Matematica Pura ed Applicata},
	year = {1988},
	volume = {152},
	pages = {183-196}
}

@article{Bottaro1973,
	author = {Bottaro, G. and Marina, M. E.},
	title = {Problemi di Dirichlet per equazioni ellittiche di tipo variazionale su insiemi non limitati},
	journal = {Bollettino della Unione Matematica Italiana},
	year = {1973},
	volume = {8},
	pages = {46-56}
}

@article{DalMaso1999,
	author = {Dal Maso, G. and Murat, F. and Orsina, L. and others},
	title = {Renormalized solutions of elliptic equations with general measure data},
	journal = {Annali della Scuola Normale Superiore di Pisa - Classe di Scienze},
	year = {1999},
	volume = {28},
	number = {4},
	pages = {741-808}
}

@article{DiNardo2010,
	author = {Di Nardo, R.},
	title = {Nonlinear parabolic equations with a lower order term and $L^1$ data},
	journal = {Communications on Pure and Applied Analysis},
	year = {2010},
	volume = {9},
	number = {4},
	pages = {929-942}
}

@article{DiNardo2011,
	author = {Di Nardo, R. and Feo, F. and Guibé, O.},
	title = {Existence result for nonlinear parabolic equations with lower order terms},
	journal = {Analysis and Applications},
	year = {2011},
	volume = {9},
	number = {2},
	pages = {161-186}
}

@article{DiPerna1989a,
	author = {DiPerna, R. J. and Lions, P. L.},
	title = {Ordinary differential equations, Sobolev spaces and transport theory},
	journal = {Inventiones Mathematicae},
	year = {1989a},
	volume = {98},
	number = {3},
	pages = {511-547}
}

@article{DiPerna1989b,
	author = {DiPerna, R. J. and Lions, P. L.},
	title = {On the Cauchy problem for Boltzmann equations: Global existence and weak stability},
	journal = {Annals of Mathematics},
	year = {1989b},
	volume = {130},
	number = {1},
	pages = {321-366}
}

@article{Hunt1966,
	author = {Hunt, R.},
	title = {On $L(p,q)$ spaces},
	journal = {Enseignement Mathématique},
	year = {1966},
	volume = {12},
	pages = {249-276}
}

@article{Landes1981,
	author = {Landes, R.},
	title = {On the existence of weak solutions for quasilinear parabolic initial boundary value problems},
	journal = {Proceedings of the Royal Society of Edinburgh},
	year = {1981},
	volume = {89A},
	pages = {217-237}
}

@book{Lions1969,
	author = {Lions, J. L.},
	title = {Quelques méthodes de résolutions des problèmes aux limites non linéaires},
	publisher = {Dunod et Gauthier-Villars},
	address = {Paris},
	year = {1969}
}

@article{Lorentz1950,
	author = {Lorentz, G. G.},
	title = {Some new functional spaces},
	journal = {Annals of Mathematics},
	year = {1950},
	volume = {51},
	pages = {37-55}
}

@unpublished{Murat1993,
	author = {Murat, F.},
	title = {Soluciones renormalizadas de EDP elípticas no lineales},
	note = {Laboratoire d'Analyse Numérique, Paris VI, cours à l'Université de Séville},
	year = {1993}
}

@article{Nirenberg1959,
	author = {Nirenberg, L.},
	title = {On elliptic partial differential equations},
	journal = {Annali della Scuola Normale Superiore di Pisa - Classe di Scienze},
	year = {1959},
	volume = {2},
	pages = {115-162}
}

@article{Prignet1995,
	author = {Prignet, A.},
	title = {Remarks on existence and uniqueness of solutions of elliptic problems with right-hand side measures},
	journal = {Rendiconti Lincei - Matematica e Applicazioni},
	year = {1995},
	volume = {15},
	number = {3},
	pages = {321-337}
}

@article{Prignet1997,
	author = {Prignet, A.},
	title = {Existence and uniqueness of ``entropy'' solutions of parabolic problems with $L^1$ data},
	journal = {Nonlinear Analysis},
	year = {1997},
	volume = {28},
	number = {12},
	pages = {1943-1954}
}

@article{Porretta1999,
	author = {Porretta, A.},
	title = {Existence results of nonlinear parabolic equations via strong convergence of truncations},
	journal = {Annali di Matematica Pura ed Applicata},
	year = {1999},
	volume = {177},
	number = {1},
	pages = {143-172}
}

@article{Porzio1999,
	author = {Porzio, M. M.},
	title = {Existence of solutions for some ``noncoercive'' parabolic equations},
	journal = {Discrete and Continuous Dynamical Systems},
	year = {1999},
	volume = {5},
	number = {3},
	pages = {553-568}
}

@article{Serrin1964,
	author = {Serrin, J.},
	title = {Pathological solution of elliptic differential equations},
	journal = {Annali della Scuola Normale Superiore di Pisa - Classe di Scienze},
	year = {1964},
	volume = {18},
	number = {3},
	pages = {385-387}
}

@article{Simon1987,
	author = {Simon, J.},
	title = {Compact sets in the space $L^p(0,T;B)$},
	journal = {Annali di Matematica Pura ed Applicata},
	year = {1987},
	volume = {146},
	pages = {65-96}
}

@book{Ziemer1989,
	author = {Ziemer, W. P.},
	title = {Weakly Differentiable Functions},
	series = {Graduate Texts in Mathematics},
	volume = {120},
	publisher = {Springer-Verlag},
	address = {New York},
	year = {1989}
}
\end{document}